\setlist{itemsep=1mm,topsep=1mm}
\theoremstyle{plain}
\newtheorem{thm}{Theorem}[section]
\newtheorem{lemma}[thm]{Lemma}
\newtheorem{prop}[thm]{Proposition}
\newtheorem{cor}[thm]{Corollary}
\theoremstyle{definition}
\newtheorem{definition}[thm]{Definition}
\newtheorem{question}[thm]{Question}
\newtheorem*{remark*}{Remark}
\DeclareMathOperator{\Av}{Av}
\newcommand{\digraphsclass}{\mathcal{D}}
\newcommand{\Rdigraphsclass}{\digraphsclass_R}
\newcommand{\Idigraphsclass}{\digraphsclass_I}
\newcommand{\Rgraphsclass}{\mathcal{G}_R}
\newcommand{\Igraphsclass}{\mathcal{G}_I}
\newcommand{\Rtournamentsclass}{\mathcal{T}_R}
\begin{document}

\title[WQO, graphs and homomorphic image orderings]{On well quasi-order of graph classes under homomorphic image orderings}

\author{S. Huczynska}
\address{School of Mathematics and Statistics, University of St Andrews, St Andrews, Scotland, U.K.}
\email{sh70@st-andrews.ac.uk}

\author{N. Ru\v{s}kuc}
\address{School of Mathematics and Statistics, University of St Andrews, St Andrews, Scotland, U.K.}
\email{nik.ruskuc@st-andrews.ac.uk}

\begin{abstract}
In this paper we consider the question of well quasi-order for classes defined by a single obstruction within the classes of all graphs, digraphs and tournaments, under the homomorphic image ordering (in both its standard and strong forms).  The homomorphic image ordering was introduced by the authors in a previous paper and corresponds to the existence of a surjective homomorphism between two structures. We obtain complete characterizations in all cases except for graphs under the strong ordering, where some open questions remain.
\end{abstract}

\subjclass[2010]{05C60, 06A06, 05C20, 05C75}

\maketitle

\section{Introduction}
\label{sec-intro}

Combinatorial structures have been considered under various different orderings; for example, substructure order (for which we may make a further distinction between weak and induced) and homomorphism order.  For specific types of combinatorial object, there are other well-known orderings, for example the minor order on the class of graphs.  All of these have received considerable attention in the combinatorial literature.

The starting premise in this work is the observation that the notion of homomorphism provides a useful unifying viewpoint from which to consider many of these orderings. 
Two structures $A$ and $B$ are related under the homomorphism (quasi-)order if there exists \emph{any} homomorphism between them, while $A$ and $B$ are related under the substructure order if there exists an \emph{injective} homomorphism between them (a ``standard" homomorphism in the case of weak substructure, and a strong homomorphism in the case of induced substructure).  
The study of substructure orderings is pervasive throughout combinatorics; for an introduction into the homomorphism ordering for graphs the reader may refer to \cite[Chapter 3]{hell}.

By way of analogy with the substructure order, it is natural to consider the partial order corresponding to the existence of a \emph{surjective} homomorphism between two structures.  As with the substructure order, we may distinguish between weak and induced forms.   In a previous paper (\cite{HucRus15}), we introduced this order, which we called the \emph{homomorphic image order}; consideration of different strengths led to the standard, strong and $M$-strong forms of the order.  Perhaps surprisingly, this order had previously received very little attention in the literature.  
One notable exception is \cite{landraitis}, where the homomorphic image ordering is considered for the class of countable linear orders.   
As well as the naturalness of the definition, another motivation for studying 
the homomorphic image orders is that the graph minor order may be viewed as a composition of substructure order with a special kind of homomorphic image order.
With the recent increase in prominence of minor-like orders (see, for example, 
\cite{Bla}, \cite{kim15}),
  one might hope that further understanding of homomorphic image orders for graphs could enable new insights into the minor orders.

Some fundamental graph-theoretical properties are preserved by the taking of homomorphic images, for instance being connected, and having diameter at most $d$.
In particular, the class of all connected graphs can be defined by avoiding (in the sense of homomorphic images) the empty graph of size $2$.
It is perhaps worth noting that the above properties are not preserved by taking subgraphs (standard or induced), while the properties that are known to be preserved by the latter, such as planarity, are not preserved by homomorphic images.

This duality carries through into the area of well quasi-order and antichains; it transpires that the properties of the homomorphic image order are quite different in flavour from those of the more familiar substructure order.  Within the class of (reflexive) graphs, many of the ``classic'' antichains under the substructure order, for example cycles and double-ended forks, are not antichains under the homomorphic image order (both of these in fact become chains).  Conversely, antichains under the homomorphic image order may not be antichains in the substructure order; for example, the family of complete graphs with alternate perimeter edges deleted, forms an antichain under the former but not under the latter.  

Well quasi-order for classes of graphs and related combinatorial structures is a natural and much-studied topic.  Whenever we have classes of such structures which we wish to compare, for example in terms of inclusion or homomorphic images, we are led to consider downward-closed sets under the chosen orderings.  The concept of well quasi-order then allows us to distinguish between what we may call (following Cherlin in \cite{Che11}) ``tame" and ``wild" such classes.

A \emph{quasi-order} is a binary relation which is reflexive ($x \leq x$ for all $x$) and transitive ($x \leq y \leq z$ implies $x \leq z$).  A quasi-order which is also anti-symmetric ($x \leq y \leq x$ implies $x=y$) is called a \emph{partial order}; all orders considered in this paper are partial orders.

A \emph{well quasi-order (wqo)} is  a quasi-order which is \emph{well-founded}, i.e. every strictly decreasing sequence is finite, and \emph{has no infinite antichain}, i.e. every set of pairwise incomparable elements is finite.  Since we will be considering only finite structures, and our orderings respect size, wqo is equivalent to the non-existence of infinite antichains throughout.

Given a quasi-order $(X,\leq)$, a subset $I$ of $X$ is called an \emph{ideal} or \emph{downward closed set} if $y \leq x \in I$ implies $y \in I$.  Ideals are precisely \emph{avoidance sets}, i.e. sets of the form $\Av(B)=\{x \in X : (\forall b \in B)(b \not \leq x) \}$.  Here $B$ is an arbitrary subset of $X$, finite or infinite.  The situation in which every ideal is defined by a \emph{finite} avoidance set is precisely the case when $X$ is wqo.

Questions about well quasi-order of graphs and related structures have been extensively investigated.  While the class of all graphs is not wqo under the subgraph order nor the induced subgraph order, a celebrated result of Robertson and Seymour (\cite{RobSey04}) establishes that it is wqo under the minor order.  When a class itself is not wqo, one can investigate the wqo ideals within that class and attempt to describe them.  For example, a result due to Ding (\cite{Din92}) establishes that an ideal of graphs with respect to the subgraph ordering is wqo precisely if it contains only finitely many cycles and double-ended forks.

We may ask the following general question about a class $(\mathcal{C}, \leq)$ of finite structures equipped with a natural ordering:  \emph{given a finite set  $\{ X_1,\dots,X_k\} \subseteq \mathcal{C}$ of forbidden structures, is the ideal $\Av(X_1,\dots,X_k)$ wqo?} 

It is easy to see that Ding's result (\cite{Din92}) resolves this question for subgraph ordering.
For the induced subgraph order the situation is much more complicated, and indeed the general wqo question remains open.
In the case of a single obstruction, Damaschke (\cite{Dam90}) proved that $\Av(G)$ is wqo if and only if $G$ is an induced subgraph of the path on $4$ vertices.
Some progress is made on classes defined by two obstructions in \cite{Kor11a}.
Similar analyses have been undertaken for some specific classes of graphs, such as bipartite graphs (\cite{Kor11}) and permutation graphs  (\cite{Atm}), defined by a small number of obstructions.
In a recent article \cite{Bla} the induced minor ordering is considered, where induced subgraph replaces subgraph in the usual minor definition; yet again, the class of all graphs is not wqo under this ordering, and a classification is obtained for wqo classes defined by a single obstruction.
Finally, in the class of all finite tournaments under the subtournament order, an ideal $\Av(T)$ is wqo if and only if $T$ is a linear tournament or one of three small exceptions (of size $5,6,6$, respectively); see \cite{Lat94}.  The general wqo question for arbitrary ideals of the form $\Av(T_1,\ldots,T_k)$ is wide open.

For a general discussion of wqo in a variety of combinatorial settings, we refer the reader to \cite{HucRus15a}.
The survey article
by Cherlin  (\cite{Che11}) specifically considers the wqo question in the setting of graphs, tournaments and permutations under substructure order, and discusses its algorithmic  aspects.

In \cite{HucRus15}, where a systematic study of homomorphic image orders was initiated, we have shown that the homomorphic image orders are not wqo within the classes of all graphs, 
digraphs and tournaments (although standard and strong are wqo for trees).
In line with the situation for embedding orderings, briefly outlined above, the next natural step is to consider the wqo question for proper subclasses.
In this paper we consider the subclasses defined by a single obstruction under the homomorphic image ordering and the strong homomorphic image ordering.  We obtain complete characterizations in all instances except for graphs under the strong ordering, where some open questions remain.

\section{Preliminaries}
\label{sec-prelim}

In \cite{HucRus15}, we introduced the homomorphic image order for arbitrary relational structures.  Since, in this paper, we consider only graph-like structures, it is sufficient to give the definitions for the case of finite structures with a single binary relation.

\begin{definition}
For two structures $\mathcal{S}=(S,R_S)$ and $\mathcal{T}=(T,R_T)$, 
with $R_S$ and $R_T$ binary, and a mapping $\phi:S\rightarrow T$, we let
\[
\phi(R_S)=\{ (\phi(s_1),\phi(s_2)) : (s_1,s_2)\in R_S\},
\]
and say that $\phi$ is:
\begin{itemize}
\item[(i)] a (\emph{standard}) \emph{homomorphism} if $(s_1,s_2) \in R_S \Rightarrow (\phi(s_1),\phi(s_2)) \in R_T$,  i.e., if $\phi(R_S)\subseteq R_T|_{\phi(S)}$;
\item[(ii)] a \emph{strong homomorphism} if $\phi$ is a homomorphism
and $\phi(R_S)=R_T|_{\phi(S)}$.
\end{itemize}
A surjective (strong) homomorphism is called a (strong) \emph{epimorphism}.
\end{definition}

Our definition of strong homomorphism requires that every related pair in $\phi(S)$ must be the image of at least one related pair in $S$.

\begin{definition}
For a class $\mathcal{C}$ of relational structures, we define two orders on its members as follows:
\begin{itemize}
\item homomorphic image order: for $A,B\in\mathcal{C}$, $A \preceq B$ if there exists an epimorphism $B \rightarrow A$;
\item strong (induced) homomorphic image order: for $A,B\in\mathcal{C}$, $A \preceq B$ if there exists a strong epimorphism $B \rightarrow A$.
\end{itemize}
\end{definition}

It was shown in \cite{HucRus15} that both these relations are actually partial orders if $\mathcal{C}$ consists of finite structures.

We now proceed to define the structures which we will consider.

\begin{definition}  
A \emph{digraph} is a set $D$ with a binary relation $E(D)$.
\end{definition}

In a digraph $D$, a related pair $(x,y) \in E(D)$ is called a \emph{(directed) edge}.  Sometimes we will write $x \rightarrow y$ to indicate that $(x,y) \in E(D)$
and $x||y$ to mean $x\not\rightarrow y$ and $y\not\rightarrow x$.  
A digraph $D$ is said to be \emph{reflexive} if every pair $(x,x)$ ($x\in D$) is an edge, and \emph{irreflexive} if no such pair is an edge.
Note that being irreflexive is different (and stronger) than not being reflexive.
A digraph $D$ is said to be \emph{complete} if all pairs $(x,y)$, with $x$ and $y$ distinct, are edges; and it is said to be \emph{empty} if there is no edge $(x,y)$ with $x\neq y$. 

A digraph homomorphism $\phi:D_1 \rightarrow D_2$ maps edges to edges; $\phi$ is strong if it maps $E(D_1)$ \emph{onto} $E(D_2)$.

\begin{definition}
A \emph{graph} is a digraph $G$ in which the edge relation $E(G)$ is symmetric.
\end{definition}

Here, an (undirected) edge corresponds to two pairs $(x,y)$ and $(y,x)$ and we often denote this by $\{x,y\}$.  Furthermore, we will require that $E(G)$ is either irreflexive or reflexive.  This choice affects the notion of homomorphisms: in the irreflexive version a homomorphism may not ``collapse" an edge to a single vertex, while in the reflexive version both edges and non-edges may be so collapsed.

\begin{definition}
A \emph{tournament} is a digraph $T$ in which, for any two distinct $x,y \in T$, precisely one of $(x,y)$ or $(y,x)$ is an edge.  
\end{definition}

Again, we consider reflexive and irreflexive tournaments.  In the irreflexive case, since a homomorphism may not collapse an edge, every homomorphism is injective.

When there is no risk of confusion, we will notationally identify a structure with the set of its elements.

We now proceed to prove a technical wqo result and some consequences which will be repeatedly used throughout the paper.

\begin{thm}\label{HigmanDigraph}
\label{thmTHigmanD}
Let $N$ be a natural number, and let $\mathcal{T}_{N}$ be the class of all digraphs $D$ whose vertex set can be split into a disjoint union $D_e\cup D_c\cup D_f$ such that:
\begin{itemize}
\item
the digraph induced on $D_e$ is empty (reflexive or irreflexive);
\item
the digraph induced on $D_c$ is complete (reflexive);
\item
$|D_f|\leq N$;
\item
the connections between $D_e$ and $D_c$ are uniform, in the sense that for all $x,y\in D_e$ and all $z,t\in D_c$ we have:
\begin{eqnarray*} 
&&
x\rightarrow z \Leftrightarrow y\rightarrow t, \mbox{ and } 
\\
&&
 x\leftarrow z \Leftrightarrow y\leftarrow t.
\end{eqnarray*}
\end{itemize}
The class $\mathcal{T}_{N}$ is well quasi-ordered under both the standard and strong homomorphic image orderings.
\end{thm}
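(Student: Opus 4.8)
The plan is to reduce the statement to an application of Higman's lemma on well-quasi-ordering of words (or, more precisely, of finite sequences over a wqo), by encoding each digraph $D\in\mathcal{T}_N$ by a bounded amount of ``discrete'' data together with one or two natural numbers recording the sizes of $D_e$ and $D_c$. First I would observe that any $D\in\mathcal{T}_N$ is determined up to isomorphism by: (a) the induced sub-digraph on $D_f$ (a digraph on at most $N$ vertices, so finitely many possibilities); (b) the reflexivity type of $D_e$ and whether $D_e$ is nonempty, together with $|D_e|$; (c) whether $D_c$ is nonempty, together with $|D_c|$; (d) the ``uniform'' connection pattern between $D_e$ and $D_c$, which by hypothesis is one of only four possibilities ($\to$, $\leftarrow$, both, neither); and (e) the connections between $D_f$ and $D_e$ and between $D_f$ and $D_c$ — but these need some care, since they need not be uniform. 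The key point is that each vertex $v\in D_f$ sees the block $D_e$ through one of four ``link types'' and likewise sees $D_c$ through one of four link types, so the interaction of $D_f$ with the two big blocks is captured by a function from $D_f$ (size $\le N$) into a set of size $16$, giving again only finitely many patterns. Hence $\mathcal{T}_N$ is parametrized by a finite set of ``shape types'' $\sigma$, and within a fixed shape type the only unbounded parameters are the pair $(|D_e|,|D_c|)\in\mathbb{N}^2$.

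Next I would show that for digraphs $D,D'$ of the \emph{same} shape type $\sigma$, we have $D\preceq D'$ (and similarly for the strong order) whenever $|D_e|\le|D'_e|$ and $|D_c|\le|D'_c|$: one builds the required epimorphism as the identity on the $D_f$-part and on a chosen copy of $D_e,D_c$ inside $D'_e,D'_c$, collapsing the surplus vertices of $D'_e$ onto a single vertex of $D_e$ and the surplus vertices of $D'_c$ onto a single vertex of $D_c$. Because the $D_e$--$D_c$ connections and the $D_f$--$D_e$, $D_f$--$D_c$ connections are uniform in each block, this collapsing map really is a homomorphism, and in fact a \emph{strong} one, since every edge of the smaller digraph is realized (the image is literally an induced sub-digraph that is then ``filled back up''). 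One subtlety: this argument needs $|D'_e|\ge 1$ whenever $|D_e|\ge 1$ and $|D'_c|\ge1$ whenever $|D_c|\ge1$, i.e. nonemptiness of a block must be part of the shape type, which is why I included it in (b), (c) above; and when $D_e$ (or $D_c$) is irreflexive with exactly one vertex versus reflexive, the collapsing onto a single vertex could create or destroy a loop, so the reflexivity type must also be pinned down in $\sigma$ — again already arranged.

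Finally, I would assemble the pieces. Since $(\mathbb{N},\le)$ is a wqo, so is $(\mathbb{N}^2,\le)$ (componentwise) by Dickson's lemma (a special case of Higman's lemma); hence within each of the finitely many shape types the digraphs are wqo under the order ``$|D_e|\le|D'_e|$ and $|D_c|\le|D'_c|$'', which by the previous paragraph refines into $\preceq$ (respectively the strong order). A finite union of wqo'd sets is wqo, so $\mathcal{T}_N$ is wqo under both orderings, as claimed. The main obstacle — really the only place requiring genuine care rather than bookkeeping — is verifying that the collapsing map in the middle step is a well-defined homomorphism and, for the strong version, surjective on edges, in every combination of reflexivity conventions (reflexive/irreflexive $D_e$) and in the boundary cases where a block has size $0$ or $1$; I would handle this by a short case analysis organized by the reflexivity type of $D_e$ and by whether the collapsed-to vertex lies in $D_e$ or $D_c$, checking that uniformity of all cross-block connections makes each potential edge of the target either present in the chosen embedded copy or realized by the collapse.
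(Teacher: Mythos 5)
There is a genuine gap, and it lies exactly where you located the ``only place requiring genuine care'': your item (e). You assert that each vertex $v\in D_f$ sees the block $D_e$ through one of four link types (and likewise for $D_c$), so that the $D_f$-to-blocks interaction is captured by a map from $D_f$ into a $16$-element set. But the theorem's uniformity hypothesis applies \emph{only} to the connections between $D_e$ and $D_c$; nothing constrains how $D_f$ is joined to the two large blocks. A vertex $v\in D_f$ may be adjacent to some vertices of $D_e$ and not to others, in arbitrary combinations. Consequently your claim that a digraph in $\mathcal{T}_N$ is determined up to isomorphism by a finite ``shape type'' together with the pair $(|D_e|,|D_c|)$ is false, and the collapsing map in your middle step need not be a homomorphism: a surplus vertex of $D'_e$ being collapsed onto a chosen vertex $u\in D_e$ may have an edge to some $f\in D_f$ that $u$ lacks, so an edge of $D'$ is sent to a non-edge of $D$.

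The repair is to refine the encoding so that it remembers, for each vertex of $D_e\cup D_c$, its entire pattern of connections to $D_f$. Since $|D_f|\leq N$, there are only finitely many such patterns (``types''), say $P$ of them; after first fixing $D_f$ and the $D_e$--$D_c$ connection pattern (finitely many choices, as you correctly argue), each digraph is determined by the $2P$-tuple counting the vertices of each type in $D_e$ and in $D_c$. One then applies Dickson's lemma in $\mathbb{N}^{2P}$ rather than $\mathbb{N}^2$, and builds the epimorphism so that it is the identity on $D_f$ and maps each type class of the larger digraph \emph{onto} the corresponding type class of the smaller one; because types are respected, the edges into $D_f$ are preserved and realized, and the rest of your case analysis (empty block, complete block, uniform cross-connections) goes through for both the standard and strong orderings. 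This is in fact the route the paper takes; your skeleton (finite discrete data plus Dickson's lemma plus a collapsing epimorphism) is the right one, but the two-parameter version of it does not suffice.
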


\begin{remark*}
Intuitively, the uniformity requirement between $D_e$ and $D_c$ can be viewed as follows:
either all the pairs $(x,z)$ with $x\in D_e$ and $z\in D_c$ are edges of $D$, or none are;
likewise, either all the pairs $(z,x)$ with $x\in D_e$ and $z\in D_c$ are edges of $D$, or none are.
\end{remark*}

\begin{proof}
It suffices to prove the result for the strong homomorphic image ordering.
Suppose $\mathcal{A} \subseteq \mathcal{T}_N$ is an infinite antichain. 
Since
\begin{itemize}
\item there are finitely many digraphs of size $\leq N$;
\item there are four possible connections between $D_e$ and $D_c$;
\item $\mathcal{A}$ is infinite;
\end{itemize}
we may assume without loss of generality that all $D \in \mathcal{A}$ have the same $D_f$ and the same type of connection between $D_e$ and $D_c$.  Write $F$ for the (common) $D_f$.

In what follows we use the following observations: (1) There are finitely many ways in which a vertex from $D_e$ or $D_c$ can be connected to $F$; we will call this the \emph{type} of a vertex. (2) A digraph from $\mathcal{A}$ is uniquely determined by a sequence, giving the number of vertices of each available type. (3) Any mapping between two such digraphs which fixes $F$ and respects types is a strong homomorphism.
Together with an application of Dickson's Lemma (which can be viewed as a special case of the well-known Higman's wqo theorem \cite{Hig52}) this will enable us to prove that $\mathcal{A}$ cannot be an antichain, and thus obtain a contradiction.

So, more formally, let $\mathbb{T}$ be the (finite) set of all digraphs obtained from $F$ by adding a single vertex to the vertex set and connecting it to $F$ arbitrarily. Let $\mathbb{T}=\{T_1, \ldots, T_P \}$.  We will refer to the elements of $\mathbb{T}$ as \emph{types}.  Let $D=F \cup D_e \cup D_c \in \mathcal{A}$ be arbitrary.  We shall say that a vertex $v \in D_e \cup D_c$ has \emph{type} $T_i$ ($1 \leq i \leq P$) if the subdigraph of $D$ induced on $F \cup \{v\}$ is isomorphic to $T_i$.

For $i=1,\ldots,P$, write:
\[ \tau_{e,i}(D)=|\{v \in D_e: v \mbox{ is of type } T_i \}|\]
\[ \tau_{c,i}(D)=|\{v \in D_c: v \mbox{ is of type } T_i \}|\]
and let
\[ \tau(D)=(\tau_{e,1}(D), \ldots, \tau_{e,P}(D); \tau_{c,1}(D), \ldots,\tau_{c.P}(D)).\]
We note that $D$ can be uniquely reconstructed from the sequence $\tau(D)$.

Dickson's Lemma now guarantees that the set of all $2P$-tuples of non-negative integers is wqo by the componentwise ordering.  Therefore there exist $D_1=F \cup D_{1,e} \cup D_{1,c}$ and $D_2=F \cup D_{2,e} \cup D_{2,c}$ in $\mathcal{A}$ such that
\[ \tau_{e,i}(D_1) \leq \tau_{e,i}(D_2) \mbox{ and } \tau_{c,i}(D_1) \leq \tau_{c,i}(D_2), i=1, \ldots,P. \]

In other words, if for $i \in \{1, \ldots,P \}$, $j \in \{1,2 \}$ and $z \in \{e,c\}$ we let $E_{i,j.z}$ be the set of vertices in $D_{j,z}$ of type $T_i$, then 
\[ |E_{i,1,z}| \leq | E_{i,2,z}| \mbox{ for all } i=1, \ldots,P; z \in \{e,c\}. \]
Furthermore, clearly
\[ D_{j,z}= \bigcup_{1 \leq i \leq P} E_{i,j,z}.\]

Let $\Phi: D_2 \rightarrow D_1$ be any mapping satisfying:
\begin{itemize}
\item[(1)] $\Phi|_F$ is the identity;
\item[(2)] $\Phi$ maps $E_{i,2,z}$ surjectively onto $E_{i,1,z}$.
\end{itemize}
We now prove that $\Phi$ is a strong homomorphism, which will contradict the fact that $\mathcal{A}$ is an antichain and complete the proof.  To see that $\Phi$ is a homomorphism we need to verify that it maps an arbitrary edge $(x,y)$ of $D_2$ onto an edge of $D_1$.  We have the following cases:
\begin{itemize}
\item If $x,y \in F$ then $(\Phi(x),\Phi(y))=(x,y) \in E(D_1)$.
\item If $x,y \in D_{e,2}$ then $x=y$ and $D_{e,1}, D_{e,2}$ are both reflexive.  Hence $\Phi(x)\in D_{e,1}$ and $(\Phi(x),\Phi(y))$ is a loop in $D_1$.
\item If $x,y \in D_{c,2}$ then $\Phi(x),\Phi(y) \in D_{c,1}$ which is complete, so $(\Phi(x),\Phi(y)) \in E(D_1)$.
\item If $x \in D_{2,z}$ and $y \in F$ then $x \in E_{i,2,z}$ for some $i$, so that $\Phi(x) \in E_{i,1,z}$ while $\Phi(y)=y$.  The pair $(\Phi(x),y)$ is an edge in $D_1$ because $x$ and $\Phi(x)$ have the same type.  The case when $x \in F$ and $ y \in D_{2,z}$ is analogous.
\item If $x \in D_{e,2}$ and $y \in D_{c,2}$ then $\Phi(x) \in D_{e,1}$ and $\Phi(y) \in D_{c,1}$; since all members of $\mathcal{A}$ have the same type of uniform connection between the empty and complete blocks, it follows that $(\Phi(x),\Phi(y)) \in E(D_1)$.  The case where $x \in D_{c,2}$ and $y \in D_{e,2}$ is analogous.
\end{itemize}

Finally, to prove that $\Phi$ is strong we need to show that for every edge $(x,y) \in E(D_1)$ there is an edge $(z,t) \in E(D_2)$ such that $(\Phi(z),\Phi(t))=(x,y)$.  This follows from the defining properties (1) and (2) of $\Phi$ and the assumption about uniform connections between the empty and complete components, via a case analysis similar to the above.
\end{proof}

Although the formulation of
Theorem \ref{HigmanDigraph} is somewhat technical,
it provides a general framework within which the wqo property for various classes defined by structural properties can be proved.
As a first example, we prove the following result.

For a digraph $D$, edges $(a_1,b_1), \ldots, (a_k,b_k)$ are said to be \emph{disjoint} if all the vertices $a_1,\ldots,a_k,b_1,\ldots,b_k$ are distinct, in which case  we refer to $\{(a_1,b_1), \ldots, (a_k,b_k)\}$ as a \emph{disjoint edge set}.

\begin{cor}\label{BoundedIndSet}
Any class of digraphs for which there is a uniform bound on the size of disjoint edge sets is well quasi-ordered under the standard and strong homomorphic image orderings.
\end{cor}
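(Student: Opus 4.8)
The plan is to reduce the statement directly to Theorem \ref{HigmanDigraph}. Let $\mathcal{C}$ be a class of digraphs in which every disjoint edge set has size at most $N$. First I would show that $\mathcal{C}\subseteq\mathcal{T}_{2N}$. Fix $D\in\mathcal{C}$ and, since $D$ is finite, pick a disjoint edge set $\{(a_1,b_1),\dots,(a_k,b_k)\}$ of \emph{maximum} size; by hypothesis $k\le N$. Put $D_f=\{a_1,\dots,a_k,b_1,\dots,b_k\}$, so $|D_f|\le 2N$. By maximality, every edge $(x,y)$ of $D$ with $x\neq y$ must meet $D_f$, since otherwise adjoining $(x,y)$ would yield a strictly larger disjoint edge set. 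Hence, writing $D_e=D\setminus D_f$, the subdigraph of $D$ induced on $D_e$ has no edges between distinct vertices, i.e.\ it is empty; taking $D_c=\emptyset$, the decomposition $D=D_e\cup D_c\cup D_f$ with $|D_f|\le 2N$ satisfies all the hypotheses of Theorem \ref{HigmanDigraph} (with parameter $2N$): the conditions on the complete block $D_c$, and the uniformity condition between $D_e$ and $D_c$, all hold vacuously. Thus $D\in\mathcal{T}_{2N}$.

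Once $\mathcal{C}\subseteq\mathcal{T}_{2N}$ is in hand, the conclusion is immediate. An infinite antichain of $\mathcal{C}$ under the standard (resp.\ strong) homomorphic image ordering would also be an infinite antichain of $\mathcal{T}_{2N}$ under that ordering, contradicting Theorem \ref{HigmanDigraph}; and since these orderings respect the number of vertices, there can be no infinite strictly descending chain either. Therefore $\mathcal{C}$ is well quasi-ordered under both orderings.

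The argument is essentially a routine unwinding of definitions, so I do not expect a serious obstacle; the one point I will be careful about is that the empty block $D_e$ produced above may carry a loop at some of its vertices and not at others, whereas one could read the ``empty (reflexive or irreflexive)'' clause of Theorem \ref{HigmanDigraph} as requiring uniformity. This causes no difficulty: in the proof of that theorem the ``type'' of a vertex $v$ is the isomorphism type of the subdigraph induced on $D_f\cup\{v\}$, which already records whether $v$ carries a loop, so the type-preserving map $\Phi$ automatically sends loops to loops, and the verification that $\Phi$ is a strong epimorphism goes through verbatim. (Alternatively, if one works only with irreflexive digraphs the point disappears, since then $D_e$ is literally the empty irreflexive digraph.)
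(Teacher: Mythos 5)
Your proof is correct and is essentially identical to the paper's own argument: the same maximal disjoint edge set, the same choice of $D_f$, $D_e$ and $D_c=\emptyset$, followed by an appeal to Theorem \ref{HigmanDigraph}. The extra remark about loops in $D_e$ is a reasonable point of care, and your resolution of it (the vertex type already records the presence of a loop) is sound.
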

\begin{proof}
Let $N \in \mathbb{N}$ and let $\mathcal{C}$ be a class of digraphs such that all disjoint edge sets are of size $\leq N$.  We prove that $\mathcal{C} \subseteq \mathcal{T}_{2N}$, which is wqo by Theorem \ref{HigmanDigraph}.  Let $D \in \mathcal{C}$, and let $(a_1,b_1), \ldots,(a_k,b_k)$ be a maximal set of disjoint edges.  Observe that $k \leq N$.  Let $D_f=\{a_1,\ldots,a_k,b_1,\ldots,b_k \}$; clearly $|D_f|=2k \leq 2N$.  By maximality, every edge of $D$ has at least one of its endpoints in $D_f$.  Thus, letting $D_e$ comprise all the vertices of $D$ not in $D_f$, we see that the induced digraph on $D_e$ is empty.  Finally, setting $D_c=\emptyset$, we see that all the conditions from Theorem \ref{HigmanDigraph} are satisfied, proving that $D \in \mathcal{T}_{2N}$, as required.
\end{proof}

Recall that graphs can be viewed as (symmetric) digraphs, and so the above results can be specialised for graphs.
We record these specialisations for ease of future use:

\begin{thm}
\label{HigmanGraph}
Let $N$ be a natural number, and let $\mathcal{T}_{N}$ be the class of all reflexive graphs $G$ whose vertices can be split into a disjoint union $G_e\cup G_c\cup G_f$ such that:
\begin{itemize}
\item
the graph induced on $G_e$ is empty;
\item
the graph induced on $G_c$ is complete;
\item
$|G_f|\leq N$; 
\item
the connections between $G_e$ and $G_c$ are uniform, in the sense that for all $x,y\in G_e$ and all $z,t\in G_c$ 
we have that $x$ is adjacent to $z$ if and only if $y$ is adjacent to $t$.
\end{itemize}
The class $\mathcal{T}_{N}$ is well quasi-ordered under the standard and strong homomorphic image orderings.
\end{thm}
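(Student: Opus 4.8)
The plan is to deduce this immediately from Theorem \ref{HigmanDigraph} by regarding a reflexive graph as a symmetric reflexive digraph, and checking that nothing is lost in this identification. First I would observe that the two homomorphic image orderings are compatible with the identification: if $A$ and $B$ are symmetric digraphs, then a map $B\to A$ is a (strong) graph epimorphism precisely when it is a (strong) digraph epimorphism. This is because symmetry of the edge relation makes $x\rightarrow z$, $x\leftarrow z$ and ``$x$ adjacent to $z$'' coincide, so the digraph and graph notions of homomorphism, of strongness, and of surjectivity all agree on symmetric structures. Consequently, comparability of reflexive graphs under the graph (strong) homomorphic image order coincides with their comparability as digraphs under the digraph (strong) homomorphic image order; in particular any infinite antichain of reflexive graphs is an infinite antichain of digraphs.

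Next I would verify that the graph version of $\mathcal{T}_N$ sits inside the digraph version of $\mathcal{T}_N$. Given a reflexive graph $G$ with the stated decomposition $G=G_e\cup G_c\cup G_f$, set $D_e=G_e$, $D_c=G_c$, $D_f=G_f$. The induced graph on $G_e$ being empty gives an empty (reflexive) induced subdigraph on $D_e$; the induced graph on $G_c$ being complete gives a complete reflexive induced subdigraph on $D_c$; clearly $|D_f|=|G_f|\leq N$; and the graph uniformity hypothesis, together with symmetry, yields both digraph uniformity conditions $x\rightarrow z\Leftrightarrow y\rightarrow t$ and $x\leftarrow z\Leftrightarrow y\leftarrow t$ for $x,y\in D_e$, $z,t\in D_c$. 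Hence $G$, viewed as a digraph, belongs to the digraph class $\mathcal{T}_N$ of Theorem \ref{HigmanDigraph}.

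Combining these two points finishes the argument: under the identification, the graph class $\mathcal{T}_N$ is a subclass of the digraph class $\mathcal{T}_N$, which is wqo under both the standard and the strong homomorphic image orderings by Theorem \ref{HigmanDigraph}; since a subclass of a well-quasi-ordered class is well-quasi-ordered, the graph class $\mathcal{T}_N$ is wqo under both orderings. I do not expect a genuine obstacle here; the only point requiring a moment's care is the compatibility of the graph and digraph homomorphic image orderings under the symmetric-digraph identification, and this follows at once from symmetry.
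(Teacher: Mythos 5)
Your proposal is correct and follows exactly the route the paper intends: the paper states Theorem \ref{HigmanGraph} as an immediate specialisation of Theorem \ref{HigmanDigraph} via the identification of reflexive graphs with symmetric reflexive digraphs, offering no further proof. Your write-up simply makes explicit the two routine checks (compatibility of the orderings under the identification, and containment of the graph class in the digraph class) that the paper leaves to the reader.
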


\begin{cor}\label{BoundedIndSetGraphs}
Any class of graphs for which there is a uniform bound on the size of disjoint edge sets is well quasi-ordered under the standard and strong homomorphic image orderings.
\end{cor}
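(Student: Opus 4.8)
The plan is to mimic the proof of Corollary~\ref{BoundedIndSet}, replacing the digraph framework of Theorem~\ref{HigmanDigraph} by its graph specialisation Theorem~\ref{HigmanGraph}. Fix $N\in\mathbb{N}$ and let $\mathcal{C}$ be a class of graphs in which every disjoint edge set has size at most $N$. I would show directly that $\mathcal{C}\subseteq\mathcal{T}_{2N}$, where $\mathcal{T}_{2N}$ is the class appearing in Theorem~\ref{HigmanGraph}; since that class is wqo under both the standard and strong homomorphic image orderings, the conclusion follows at once.

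To establish the inclusion, take an arbitrary $G\in\mathcal{C}$ and choose a \emph{maximal} disjoint edge set $\{a_1,b_1\},\dots,\{a_k,b_k\}$ in $G$; by hypothesis $k\leq N$. Put $G_f=\{a_1,\dots,a_k,b_1,\dots,b_k\}$, so that $|G_f|=2k\leq 2N$. Maximality of the chosen edge set forces every edge of $G$ to have an endpoint in $G_f$, so, letting $G_e$ comprise all vertices of $G$ not in $G_f$, the graph induced on $G_e$ is empty. Finally set $G_c=\emptyset$: the graph induced on $G_c$ is vacuously complete and the uniformity condition between $G_e$ and $G_c$ holds trivially. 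Hence $G$ meets all the requirements of Theorem~\ref{HigmanGraph}, giving $G\in\mathcal{T}_{2N}$, as required.

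There is essentially no obstacle here, since the argument is a verbatim transcription of the proof of Corollary~\ref{BoundedIndSet}. The one point worth a word of care is the reflexive/irreflexive distinction: Theorem~\ref{HigmanGraph} is phrased for reflexive graphs, so to obtain the statement for irreflexive graphs as well one should instead invoke Theorem~\ref{HigmanDigraph} directly, viewing each graph as a symmetric digraph and using that the block $D_e$ there is permitted to be empty in either the reflexive or the irreflexive sense. With that remark in place the proof is complete.
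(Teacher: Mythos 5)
Your proof is correct and is essentially the paper's own argument: the paper obtains this corollary simply by viewing graphs as symmetric digraphs and specialising Corollary~\ref{BoundedIndSet}, whose proof you have transcribed verbatim (maximal matching into $G_f$, remaining vertices into an empty $G_e$, and $G_c=\emptyset$). Your remark about the reflexive/irreflexive issue is a sensible precaution and matches the paper's implicit reliance on Theorem~\ref{HigmanDigraph}, where the empty block may be of either kind.
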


\section{Graphs}
\label{sec-graphs}

In this section we will consider graphs in two possible models: irreflexive and reflexive, depending on the presence or otherwise of loops at individual vertices. Specifically, in a \emph{reflexive} graph it is assumed that such loops are present at \emph{all} vertices, while in the \emph{irreflexive} case there are \emph{none} at all. The class of all reflexive graphs will be denoted by $\Rgraphsclass$, while the class of all irreflexive graphs will be denoted by $\Igraphsclass$. While $\Rgraphsclass$ and $\Igraphsclass$ are equally valid models for the class of all graphs within the language of relational structures, choosing one of them profoundly affects the nature of homomorphisms.

For every $n\in\mathbb{N}$ we denote by $K_n$ the complete graph on $n$ vertices, using the same notation in both the reflexive and irreflexive cases.

\vspace{2mm}
\noindent
\textbf{Irreflexive graphs}
\vspace{2mm}

We begin with the class $\Igraphsclass$ where, in fact, we are able to solve the wqo problem completely.

\begin{thm}
\label{thm-Igraphs}
\begin{itemize}
\item[(1)]
A downward closed class $\mathcal{C}\subseteq\Igraphsclass$ of irreflexive graphs under the homomorphic image ordering is well quasi-ordered if and only if it is finite.
\item[(2)]
A class $\mathcal{C}\subseteq\Igraphsclass$ of irreflexive graphs defined by finitely many obstructions under the homomorphic image ordering or strong homomorphic image ordering is never well quasi-ordered.
\end{itemize}
\end{thm}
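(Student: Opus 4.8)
The plan is to reduce everything to one elementary observation peculiar to the irreflexive model. If $G$ is any irreflexive graph on $n$ vertices, then the identity map is a (standard) epimorphism $G\to K_n$: every edge of $G$ is a non-loop, hence an edge of the irreflexive complete graph $K_n$, and the map is bijective. So $K_n\preceq G$ in the homomorphic image order, i.e.\ $K_n$ lies below \emph{every} $n$-vertex irreflexive graph. Dually, any homomorphism out of $K_m$ (irreflexive, complete) that identifies two vertices would produce a loop, so it is injective; hence for $n\ne m$ there is no epimorphism between $K_n$ and $K_m$ in either direction, and $\{K_n:n\ge 1\}$ is an infinite antichain under both the standard and the strong homomorphic image orderings. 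The same injectivity shows that the only structure $\preceq K_n$, in either ordering, is a copy of $K_n$ itself (a bijective homomorphism must carry all $\binom n2$ edges of $K_n$ onto distinct edges).

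For part (1), the ``if'' direction is immediate since a finite quasi-order has no infinite antichain. For ``only if'', suppose $\mathcal{C}\subseteq\Igraphsclass$ is downward closed and infinite. Both orderings respect the number of vertices and there are only finitely many graphs on a given vertex set, so $\mathcal{C}$ must contain graphs on $n$ vertices for infinitely many $n$. For each such $n$ the observation above gives $K_n\preceq G$ for some $G\in\mathcal{C}$, and downward closure yields $K_n\in\mathcal{C}$. Thus $\mathcal{C}\supseteq\{K_n:n\in S\}$ for an infinite set $S$, an infinite antichain, so $\mathcal{C}$ is not well quasi-ordered.

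Part (2) follows along the same lines. Given obstructions $G_1,\dots,G_k$, note that $K_n\in\Av(G_1,\dots,G_k)$ (in either ordering) whenever $K_n$ is not isomorphic to any $G_i$, since the only structures $\preceq K_n$ are copies of $K_n$; this excludes at most $k$ values of $n$. Hence $\Av(G_1,\dots,G_k)$ contains the infinite antichain $\{K_n:K_n\ \text{not isomorphic to any}\ G_i\}$ and is not well quasi-ordered. (For the standard ordering one could alternatively invoke part (1): $\Av(G_1,\dots,G_k)$ is then downward closed, and infinite by the same count.)

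I expect essentially no obstacle once the identity-epimorphism remark is in place: it collapses the problem to the fact that, in the irreflexive world, complete graphs are simultaneously minimal and ubiquitous. The only points needing (routine) care are the bookkeeping that ``infinite'' forces unboundedly many vertex counts, and the injectivity argument ensuring $\{K_n\}$ is a genuine antichain in the strong ordering as well as the standard one.
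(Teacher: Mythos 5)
Your proposal is correct and rests on exactly the two observations the paper uses: every irreflexive graph maps onto the complete graph of its own size via the identity (standard) epimorphism, and an irreflexive complete graph has no proper homomorphic image, so the $K_n$ are minimal and form an infinite antichain. The only difference is organizational — you prove the forward direction of (1) in contrapositive form by exhibiting the antichain inside any infinite downward-closed class, whereas the paper argues via the obstruction set — so this is essentially the same proof.
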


\begin{proof}
(1)
For the forward implication, suppose that $\mathcal{C}=\mathrm{Av}(O_i: i \in I)$ is wqo.
Since the complete graphs $\{K_1,K_2,\ldots \}$ form an antichain,
$\mathcal{C}$ can contain only finitely many of them. 
Hence, there must exist $m$ such that $K_j \not\in \mathcal{C}$ for all $j>m$.
So each $K_j$ with $j>m$ has homomorphic image $O_i$ for some $i\in I$.
However,
an irreflexive complete graph has no irreflexive homomorphic image other than itself, 
and so in fact $K_j=O_i$.
Hence the list of obstructions includes all $K_{m+1}, K_{m+2}, \ldots$.
Any graph $G$ has $K_{|G|}$ as a homomorphic image, and so  $G \not\in \mathcal{C}$ if $|G|>m$,
proving that $\mathcal{C}$ is finite.
The reverse direction is immediate.

(2) From the above, any wqo class under the standard homomorphic image ordering is finite and must contain infinitely many complete graphs in its obstruction set. 
Under the strong homomorphic image ordering, (1) no longer holds: for example, the family of all empty graphs is infinite and wqo.  However, it still remains true that a complete graph has no proper homomorphic image, and so again any class defined by finitely many obstructions necessarily contains all sufficiently large complete graphs.
\end{proof}

\vspace{2mm}
\noindent
\textbf{Reflexive graphs}
\vspace{2mm}

We now turn to reflexive graphs. 
For natural numbers $n$ and $k$ with $2k\leq n$, we define the \emph{subcomplete} graph $N_{n,k}$ to be the graph obtained from
the complete graph $K_n$ by deleting $k$ disjoint edges. More precisely, $N_{n,k}$ has vertices $\{1,\dots,n\}$ and edges
\[
\{\{i,j\}\::\: 1\leq i\leq j \leq n\} \setminus\{ \{1,2\},\{3,4\},\dots,\{2k-1,2k\}\};
\]
see Figure \ref{figNnk} for illustration.
The subcomplete graphs $N_{2k,k}$, in which every vertex participates in a non-edge, will be called \emph{proper}, while the remaining subcomplete graphs $N_{n,k}$ ($2k<n$) will be referred to as \emph{partial}.
The subcomplete graphs will play a dual role in what follows: the proper ones form an important antichain, while the partial ones will define precisely the wqo avoidance classes.
The key observation in establishing this is the following:

\begin{figure}

\begin{center}
\begin{tikzpicture}

\node (v1) at (120:1.2) {};
\node (v2) at (60:1.2) {} edge (v1);
\node (v3) at (0:1.2) {} edge (v1) edge (v2);
\node (v4) at (300:1.2) {} edge (v1) edge (v2) edge (v3);
\node (v5) at (240:1.2) {} edge (v1) edge (v2) edge (v3) edge (v4);
\node (v6) at (180:1.2) {} edge (v1) edge (v2) edge (v3) edge (v4) edge (v5);

\begin{scope}[radius=1mm]
\fill (v1) circle;
\fill (v2) circle;
\fill (v3) circle;
\fill (v4) circle;
\fill (v5) circle;
\fill (v6) circle;
\end{scope}

\end{tikzpicture}
\hspace{3mm}
\begin{tikzpicture}

\node (v1) at (120:1.2) {};
\node (v2) at (60:1.2) {};
\node (v3) at (0:1.2) {} edge (v1) edge (v2);
\node (v4) at (300:1.2) {} edge (v1) edge (v2) edge (v3);
\node (v5) at (240:1.2) {} edge (v1) edge (v2) edge (v3) edge (v4);
\node (v6) at (180:1.2) {} edge (v1) edge (v2) edge (v3) edge (v4) edge (v5);

\begin{scope}[radius=1mm]
\fill (v1) circle;
\fill (v2) circle;
\fill (v3) circle;
\fill (v4) circle;
\fill (v5) circle;
\fill (v6) circle;
\end{scope}

\end{tikzpicture}
\hspace{3mm}
\begin{tikzpicture}

\node (v1) at (120:1.2) {};
\node (v2) at (60:1.2) {};
\node (v3) at (0:1.2) {} edge (v1) edge (v2);
\node (v4) at (300:1.2) {} edge (v1) edge (v2);
\node (v5) at (240:1.2) {} edge (v1) edge (v2) edge (v3) edge (v4);
\node (v6) at (180:1.2) {} edge (v1) edge (v2) edge (v3) edge (v4) edge (v5);

\begin{scope}[radius=1mm]
\fill (v1) circle;
\fill (v2) circle;
\fill (v3) circle;
\fill (v4) circle;
\fill (v5) circle;
\fill (v6) circle;
\end{scope}

\end{tikzpicture}
\hspace{3mm}
\begin{tikzpicture}

\node (v1) at (120:1.2) {};
\node (v2) at (60:1.2) {};
\node (v3) at (0:1.2) {} edge (v1) edge (v2);
\node (v4) at (300:1.2) {} edge (v1) edge (v2);
\node (v5) at (240:1.2) {} edge (v1) edge (v2) edge (v3) edge (v4);
\node (v6) at (180:1.2) {} edge (v1) edge (v2) edge (v3) edge (v4);

\begin{scope}[radius=1mm]
\fill (v1) circle;
\fill (v2) circle;
\fill (v3) circle;
\fill (v4) circle;
\fill (v5) circle;
\fill (v6) circle;
\end{scope}

\end{tikzpicture}

\end{center}

\caption{The subcomplete graphs $N_{6,k}$ for $k=0,1,2,3$. Of these, $N_{6,0}\cong K_6$, $N_{6,1}$ and $N_{6,2}$ are partial, and $N_{6,3}$ is proper.}

\label{figNnk}

\end{figure}
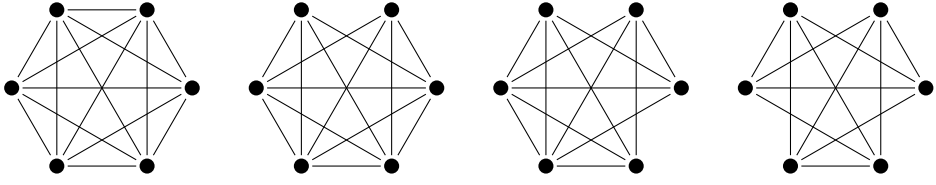

\begin{prop}
\label{prop-NC-hom-im}
Every proper homomorphic image of a subcomplete graph (proper or partial) is a partial subcomplete graph.
\end{prop}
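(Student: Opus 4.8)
The plan is to argue directly about an epimorphism $\phi\colon N_{n,k}\to H$, exploiting the fact that the non-edges of $N_{n,k}$ form a matching. Recall that $N_{n,k}$ has non-edges exactly the $k$ disjoint pairs $\{1,2\},\{3,4\},\dots,\{2k-1,2k\}$, so every vertex is non-adjacent to at most one other vertex, and every vertex $i>2k$ is adjacent to all vertices. Write $m=|H|$. Since $\phi$ is surjective, either $m<n$, or $m=n$ and $\phi$ is a bijection; ``proper'' means precisely that $H\not\cong N_{n,k}$.

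The first step is a local description of the non-edges of $H$: whenever $a\ne b$ are vertices of $H$ with $\{a,b\}\notin E(H)$, the fibres $\phi^{-1}(a)$ and $\phi^{-1}(b)$ are singletons $\{i\}$ and $\{j\}$ with $\{i,j\}$ one of the deleted pairs. Indeed, as $\phi$ is a homomorphism, no edge of $N_{n,k}$ can map onto $\{a,b\}$, so every pair with one endpoint in $\phi^{-1}(a)$ and the other in $\phi^{-1}(b)$ is a non-edge of $N_{n,k}$; since the non-edges of $N_{n,k}$ form a matching, a short argument shows this is possible only when both fibres are singletons joined by a deleted pair. Because in $N_{n,k}$ a vertex lies in at most one non-edge, this immediately yields that in $H$ too every vertex lies in at most one non-edge; hence the non-edges of $H$ form a matching, of size $j$ say. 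As $H$ is reflexive on $m$ vertices with non-edges exactly $j$ disjoint pairs, $H\cong N_{m,j}$; and since distinct non-edges of $H$ correspond to distinct deleted pairs of $N_{n,k}$, we have $j\le k$.

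It remains to check $2j<m$, i.e.\ that $N_{m,j}$ is partial rather than proper. Suppose instead $2j=m$ (it cannot exceed $m$). Then every vertex of $H$ lies in a non-edge, so by the local description every fibre of $\phi$ is a singleton, so $\phi$ is a bijection; hence $m=n$, and then $2j=n$ together with $j\le k$ and $2k\le n$ forces $j=k$. But then $H$ has exactly $k$ non-edges, each a deleted pair of $N_{n,k}$, so $H\cong N_{n,k}$, contradicting properness. Therefore $2j<m$ and $H$ is a partial subcomplete graph.

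The step I expect to need the most care is the local description of the non-edges of $H$: everything hinges on the observation that ``all cross-pairs between two fibres are non-edges of $N_{n,k}$'' can only happen for singleton fibres joined by a deleted pair, which uses the matching structure of the non-edges in an essential way. Once that is in place, the identification $H\cong N_{m,j}$ and the inequality $2j<m$ are short counting arguments; one should, however, be careful to let the argument cover the degenerate case $m=n$ (a bijective but non-isomorphic epimorphism), which is exactly the situation the properness hypothesis rules out at the final step.
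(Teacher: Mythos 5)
Your proof is correct and rests on the same key observation as the paper's: if two image vertices are non-adjacent, then every cross-pair between their fibres is a non-edge of $N_{n,k}$, which the matching structure of the deleted pairs forbids unless both fibres are singletons forming a deleted pair. If anything, your write-up is slightly more complete than the paper's, which only argues that a vertex with two preimages lies in no non-edge and thus implicitly skips the case of a bijective but non-isomorphic epimorphism (such as the identity-on-vertices map $N_{n,k}\to N_{n,k-1}$); your final counting step covers that case explicitly.
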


\begin{proof}
Let $f:N_{n,k}\rightarrow H$ be a proper epimorphism from a subcomplete graph.
Since subcomplete graphs are characterised by the property that every vertex participates in at most one non-edge, and since this property is clearly preserved by homomorphisms, it follows that $H$ is again subcomplete.
To prove that $H$ is partial, let $p$ be any vertex of $H$ that has at least two preimages $u,v$ in $N_{n,k}$, let $q$ be any other vertex of $H$, and let $z$ be a preimage of $q$.
Since in $N_{n,k}$ the vertex $z$ participates in at most one non-edge, it follows that at least one of $\{u,z\}$ or $\{v,z\}$ is an edge, implying that $\{ p,q\}$ is an edge in $H$.
Hence the vertex $p$ participates in no non-edges in $H$.
\end{proof}

Note that the above proposition can be used for both standard and strong homomorphic image orderings.
An immediate consequence for both orderings is the following:

\begin{prop}
\label{propN2kk}
The family $\mathcal{N}=\{ N_{2k,k} \::\: k=1,2,3,\dots\}$ of all proper subcomplete graphs forms an antichain under the standard (and hence also strong) homomorphic image orderings.
\end{prop}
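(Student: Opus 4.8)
The plan is to show that for distinct $j,k$ there is no epimorphism in either direction between $N_{2j,j}$ and $N_{2k,k}$, which immediately gives incomparability. Without loss of generality take $j>k$. First I would dispose of one direction by a cardinality count: an epimorphism is in particular surjective, so an epimorphism $N_{2k,k}\to N_{2j,j}$ would force $2k\geq 2j$, contradicting $j>k$. Hence $N_{2j,j}\not\preceq N_{2k,k}$.

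For the other direction, suppose $f:N_{2j,j}\to N_{2k,k}$ is an epimorphism. Since $|N_{2j,j}|=2j>2k=|N_{2k,k}|$, the map $f$ cannot be injective, so it is a \emph{proper} epimorphism. Proposition~\ref{prop-NC-hom-im} then tells us that its image, namely $N_{2k,k}$, must be a \emph{partial} subcomplete graph. But $N_{2k,k}$ is proper (every vertex participates in a non-edge), a contradiction. Hence $N_{2k,k}\not\preceq N_{2j,j}$ either, so the two graphs are incomparable; as $j,k$ were arbitrary distinct indices, $\mathcal{N}$ is an antichain under the standard homomorphic image ordering.

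Finally, since every strong epimorphism is in particular an epimorphism, the strong homomorphic image ordering is a subrelation of the standard one; an antichain in the larger relation remains an antichain in the smaller, so $\mathcal{N}$ is also an antichain under the strong ordering. In fact there is no real obstacle here: once Proposition~\ref{prop-NC-hom-im} is in hand the statement follows from it together with an elementary counting argument, which is presumably why it is labelled an ``immediate consequence''.
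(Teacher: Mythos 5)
Your argument is correct and is exactly the reasoning the paper intends: the paper gives no separate proof, presenting the proposition as an immediate consequence of Proposition~\ref{prop-NC-hom-im}, and your two cases (cardinality rules out one direction; a non-injective epimorphism would be proper and so, by Proposition~\ref{prop-NC-hom-im}, could not have the proper graph $N_{2k,k}$ as its image) fill in precisely that deduction. The remark that the strong ordering is a subrelation of the standard one, so the antichain property transfers, is also the intended justification of the parenthetical "and hence also strong".
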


The first three members of the antichain $\mathcal{N}$ are shown in Figure \ref{figAntiN}.

\begin{figure}

\begin{center}

\begin{tikzpicture}[baseline=0mm]

\node (v1) at (90:1.1) {};
\node (v2) at (0:1.2) {};
\node (v3) at (270:1.2) {} edge (v1) edge (v2);
\node (v4) at (180:1.2) {} edge (v1) edge (v2);

\begin{scope}[radius=1mm]
\fill (v1) circle;
\fill (v2) circle;
\fill (v3) circle;
\fill (v4) circle;

\end{scope}

\end{tikzpicture}
\hspace{3mm}
\begin{tikzpicture}[baseline=0mm]

\node (v1) at (120:1.3) {};
\node (v2) at (60:1.3) {};
\node (v3) at (0:1.3) {} edge (v1) edge (v2);
\node (v4) at (300:1.3) {} edge (v1) edge (v2);
\node (v5) at (240:1.3) {} edge (v1) edge (v2) edge (v3) edge (v4);
\node (v6) at (180:1.3) {} edge (v1) edge (v2) edge (v3) edge (v4);

\begin{scope}[radius=1mm]
\fill (v1) circle;
\fill (v2) circle;
\fill (v3) circle;
\fill (v4) circle;
\fill (v5) circle;
\fill (v6) circle;
\end{scope}

\end{tikzpicture}
\hspace{3mm}
\begin{tikzpicture}[baseline=0mm]

\node (v1) at (135:1.5) {};
\node (v2) at (90:1.5) {};
\node (v3) at (45:1.5) {} edge (v1) edge (v2);
\node (v4) at (0:1.5) {} edge (v1) edge (v2);
\node (v5) at (315:1.5) {} edge (v1) edge (v2) edge (v3) edge (v4);
\node (v6) at (270:1.5) {} edge (v1) edge (v2) edge (v3) edge (v4);
\node (v7) at (225:1.5) {} edge (v1) edge (v2) edge (v3) edge (v5) edge (v6);
\node (v8) at (180:1.5) {} edge (v1) edge (v2) edge (v3) edge (v5) edge (v6);

\begin{scope}[radius=1mm]
\fill (v1) circle;
\fill (v2) circle;
\fill (v3) circle;
\fill (v4) circle;
\fill (v5) circle;
\fill (v6) circle;
\fill (v7) circle;
\fill (v8) circle;
\end{scope}

\end{tikzpicture}

\end{center}

\caption{The first three members $N_{4,2}, N_{6,3},N_{8,4}$ of the antichain $\mathcal{N}$.}

\label{figAntiN}

\end{figure}
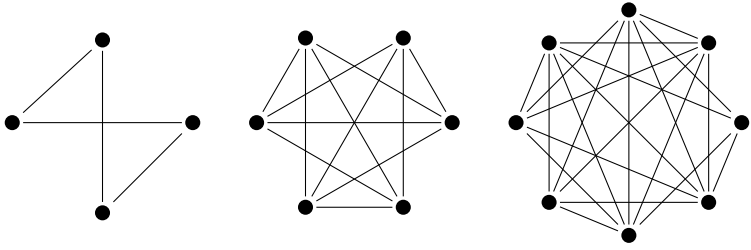

Now, specialising to the standard homomorphic image ordering, we have the following complete classification:

\begin{thm}
\label{thm-Rgraphs}
Let $G\in\Rgraphsclass$ be a reflexive graph.
The avoidance class $\Av(G)$ in $\Rgraphsclass$ under the homomorphic image ordering is well quasi-ordered if and only if $G$ is a partial subcomplete graph.
\end{thm}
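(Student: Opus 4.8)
The plan is to establish the two directions separately, with the forward direction being largely a consequence of what has already been set up.

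For the ``only if'' direction, suppose $\Av(G)$ is wqo. If $G$ is not a partial subcomplete graph, then either (a) $G$ is a proper subcomplete graph, or (b) $G$ is not subcomplete at all, meaning some vertex of $G$ participates in at least two non-edges. In case (a), I would observe that by Proposition~\ref{prop-NC-hom-im} every proper homomorphic image of a proper subcomplete graph is partial, hence no member of the antichain $\mathcal{N}=\{N_{2k,k}\}$ maps onto any other, and moreover $G=N_{2k,k}$ itself has $N_{2m,m}$ as a homomorphic image for no $m\neq k$; so all but possibly one $N_{2m,m}$ lies in $\Av(G)$, giving an infinite antichain inside $\Av(G)$ by Proposition~\ref{propN2kk}, a contradiction. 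In case (b), I would argue that $G$ cannot be a homomorphic image of any $N_{2k,k}$, because $N_{2k,k}$ is subcomplete and ``subcomplete'' is preserved under homomorphic images (the argument in Proposition~\ref{prop-NC-hom-im}); hence the entire antichain $\mathcal{N}$ is contained in $\Av(G)$, again contradicting wqo. This disposes of the forward direction.

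For the ``if'' direction, suppose $G=N_{n,k}$ is partial, so $2k<n$. The goal is to show $\Av(G)$ is wqo, and the natural route is to apply Corollary~\ref{BoundedIndSetGraphs}: it suffices to exhibit a uniform bound on the size of disjoint edge sets in members of $\Av(G)$. The key claim is that if a reflexive graph $H$ contains a disjoint edge set of size $N$ for $N$ sufficiently large (as a function of $n$), then $H$ has $N_{n,k}$ as a homomorphic image, so $H\notin\Av(G)$. To see this, take disjoint edges $\{a_i,b_i\}$, $i=1,\dots,N$; I would first use Ramsey's theorem on the $N$-element ``index set'' coloured by the adjacency pattern between pairs of edges $\{a_i,b_i\}$ and $\{a_j,b_j\}$ (finitely many types of bipartite pattern on four vertices) to extract a large subcollection, say of size $m$, that is ``uniform'': any two edges in it interact in the same way. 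Within such a uniform family one checks that collapsing is possible to produce $K_m$ or a subcomplete graph with many non-edges; in particular, for $m\geq n$ one can find a strong (hence standard) epimorphism onto $N_{n,k}$ by mapping $k$ of the uniform edges onto the $k$ non-edge pairs $\{1,2\},\dots,\{2k-1,2k\}$ of $N_{n,k}$, mapping enough further vertices onto the remaining $n-2k$ vertices, and collapsing everything else appropriately — here one uses that $H$ is reflexive, so spare vertices and edges can be absorbed by identification without creating obstructions, and that the non-edges of $N_{n,k}$ are a matching, matching the disjointness of the chosen edges. Choosing $N$ large enough (via Ramsey) that the uniform subfamily has size $\geq n$ then gives the bound, and Corollary~\ref{BoundedIndSetGraphs} finishes the proof.

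The main obstacle is the core combinatorial claim in the ``if'' direction: that a sufficiently large disjoint edge set forces $N_{n,k}$ as a homomorphic image. The delicate point is that in a partial subcomplete graph there is a ``full'' vertex participating in no non-edge, and one must be careful that the uniform edge family, after collapsing, genuinely realises both the required $k$ disjoint non-edges \emph{and} enough mutually adjacent vertices; verifying that the various bipartite interaction types between two disjoint edges each permit the needed folding (and identifying exactly how large $m$, and hence $N$, must be) is where the real work lies. I expect a careful case analysis on the (finitely many) uniform interaction types, combined with the flexibility afforded by reflexivity, to push this through.
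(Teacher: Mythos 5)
Your forward direction is fine: it is essentially the paper's argument, using Proposition~\ref{prop-NC-hom-im} to show that all but finitely many members of the antichain $\mathcal{N}$ survive into $\Av(G)$ whenever $G$ is not a partial subcomplete graph.

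The ``if'' direction, however, contains a genuine error, not just a gap. Your key claim --- that a sufficiently large \emph{disjoint edge set} in $H$ forces an epimorphism $H\rightarrow N_{n,k}$ --- is false for every $k\geq 1$. A non-edge $\{1,2\}$ of $N_{n,k}$ must pull back to a non-edge of $H$ (if $u\mapsto 1$, $v\mapsto 2$ and $\{u,v\}$ were an edge, the homomorphism condition would force $\{1,2\}$ to be an edge), so the complete graphs $K_m$ never have $N_{n,k}$ ($k\geq1$) as a homomorphic image; thus $K_m\in\Av(N_{n,k})$ for all $m$, and these graphs contain arbitrarily large disjoint edge sets. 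So there is no uniform bound of the kind you want, and Corollary~\ref{BoundedIndSetGraphs} cannot be applied to $\Av(N_{n,k})$; relatedly, your proposed map sending chosen \emph{edges} of $H$ onto the \emph{non-edge} pairs $\{1,2\},\dots,\{2k-1,2k\}$ is not a homomorphism at all. The correct invariant is dual: $H$ with $|H|\geq n$ maps onto $N_{n,k}$ if and only if $H$ contains $k$ disjoint \emph{non-edges} (map those non-edges onto the non-edges of $N_{n,k}$ and everything else onto the full vertices). Hence, apart from the finitely many graphs of size $<n$, every member of $\Av(N_{n,k})$ has at most $k-1$ disjoint non-edges; deleting the at most $2k-2$ endpoints of a maximal such family leaves a complete graph, and one concludes by Theorem~\ref{HigmanGraph} with $G_e=\emptyset$, $G_c$ the complete remainder and $G_f$ the bounded set of deleted vertices. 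No Ramsey argument is needed.
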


\begin{proof}
($\Rightarrow$)
We prove the contrapositive. By Proposition \ref{prop-NC-hom-im}, if $G$ is not a partial subcomplete graph, then $G$ is not a homomorphic image of any $N_{2k,k}$ with $2k>|G|$. Hence, $\Av(G)$ contains all sufficiently large members of the antichain $\mathcal{N}$.

($\Leftarrow$)
Consider the avoidance class $\Av(N_{n,k})$ of a partial subcomplete graph.
By definition, $N_{n,k}$ possesses a set of $k$ disjoint non-edges (in the sense that no two non-edges share a vertex).
By properties of homomorphisms, any graph $H$ which has $N_{n,k}$ as a homomorphic image must also possess $k$ disjoint non-edges.
Conversely, every graph $H$ of size at least $n$,  which has a set of $k$ disjoint non-edges, can be mapped onto $N_{n,k}$:
simply map the $k$ non-edges of $H$ onto the $k$ non-edges of $N_{n,k}$, and map the remaining vertices of $H$ arbitrarily onto the vertices of degree $n-1$ in $N_{n,k}$, just making sure that surjectivity is satisfied. Therefore, $\Av(N_{n,k})$ can be expressed as $\mathcal{F}\cup\mathcal{D}$, where $\mathcal{F}$ is the finite
set comprising all graphs of size less than $n$, and $\mathcal{D}$ consists of all graphs whose sets of disjoint non-edges have size at most $k-1$.

We would now like to prove that $\mathcal{D}$ is wqo.
To this end, consider an arbitrary graph $H\in\mathcal{D}$.
Let $l$ ($\leq k-1$) be the the size of a maximal set of disjoint non-edges in $H$, and let $A=\{a_i,b_i\::\: i=1,\dots,l\}$
be the set of endpoints of these $l$ non-edges.
Then $H$ can be written as the disjoint union $A\cup (H\setminus A)$, where, clearly, the size of $A$ is bounded (by $2k-2$), while the subgraph induced on $H\setminus A$ is complete.
By Theorem \ref{HigmanGraph}, the collection of all graphs admitting such a decomposition is wqo.
Thus $\mathcal{D}$, and hence also $\Av(N_{n,k})$, is wqo as required.
\end{proof}

Finally, we turn to reflexive graphs under the strong homomorphic image ordering, where we do not have a complete picture and a potentially interesting open problem arises. First we observe that the proof of the forward direction in Theorem \ref{thm-Rgraphs} carries over verbatim:

\begin{thm}
\label{thm-RgraphsStrong}
Let $G\in\Rgraphsclass$ be a reflexive graph not isomorphic to any of the subcomplete graphs.
Then the avoidance class $\Av(G)$ in $\Rgraphsclass$ under the strong homomorphic image ordering is not well quasi-ordered.
\end{thm}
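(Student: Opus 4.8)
The plan is to exhibit an infinite antichain inside $\Av(G)$ (with respect to the strong ordering), which immediately precludes wqo. The antichain will be a cofinal tail of the family $\mathcal{N}=\{N_{2k,k}:k\geq 1\}$ of proper subcomplete graphs, which is already known to be an antichain under the \emph{standard} homomorphic image ordering by Proposition \ref{propN2kk}.

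The key elementary observation is that every strong epimorphism is in particular an epimorphism, so $A\preceq B$ in the strong ordering implies $A\preceq B$ in the standard ordering; equivalently, the strong order is contained in the standard one. Two consequences are immediate. First, any set that is an antichain under the standard ordering is also an antichain under the strong ordering, so $\mathcal{N}$ is a strong-order antichain. Second, to check that a graph $X$ lies in the strong-order avoidance class $\Av(G)$, it suffices to check that $G\not\preceq X$ in the \emph{standard} ordering.

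It then remains to show that $N_{2k,k}\in\Av(G)$ for all $k$ with $2k>|G|$. By the second consequence above it is enough to show there is no standard epimorphism $N_{2k,k}\to G$. Since $|G|<2k=|N_{2k,k}|$, any such epimorphism would be non-injective, hence a \emph{proper} epimorphism; but by Proposition \ref{prop-NC-hom-im} every proper homomorphic image of a subcomplete graph is a partial subcomplete graph, forcing $G$ to be isomorphic to a subcomplete graph, contrary to hypothesis. Hence no such epimorphism exists, the infinite set $\{N_{2k,k}:2k>|G|\}$ is contained in $\Av(G)$, and being an infinite antichain it witnesses that $\Av(G)$ is not well quasi-ordered.

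There is no real obstacle here: this is precisely the ``verbatim'' transfer of the forward direction of Theorem \ref{thm-Rgraphs} advertised in the text, and the only point needing (minimal) care is that both the antichain property of $\mathcal{N}$ and the absence of an epimorphism onto $G$ pass from the standard to the strong setting, which follows at once from the containment of the strong order in the standard one. It is worth emphasising that, unlike in Theorem \ref{thm-Rgraphs}, no converse is asserted, since the status of $\Av(N_{n,k})$ under the strong ordering for partial subcomplete $N_{n,k}$ is exactly the open problem the authors raise.
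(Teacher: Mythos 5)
Your proposal is correct and is essentially the paper's own argument: the authors simply state that the forward direction of Theorem \ref{thm-Rgraphs} carries over verbatim, and your write-up supplies exactly that transfer, using Proposition \ref{prop-NC-hom-im} to place all $N_{2k,k}$ with $2k>|G|$ in $\Av(G)$ and Proposition \ref{propN2kk} for the antichain property, together with the (correct) observation that the strong order is contained in the standard one.
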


It is natural to ask whether all avoidance classes $\Av(N_{n,k})$ of subcomplete graphs are wqo under the strong homomorphic image ordering.   We can show that this is true in the case of complete graphs.

\begin{thm}
Within the class $\Rgraphsclass$ under the strong homomorphic image ordering,
the avoidance class $\Av(K_n)$ of a complete graph  is well quasi-ordered.
\end{thm}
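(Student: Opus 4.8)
The plan is to place the entire class $\Av(K_n)$ inside a class that is already known to be wqo — specifically, to show that disjoint edge sets in members of $\Av(K_n)$ have size bounded by a function of $n$ — and then quote Corollary~\ref{BoundedIndSetGraphs}. The first thing I would do is unwind what strong avoidance of $K_n$ means. Since $K_n$ is reflexive and complete, \emph{every} pair of its vertices (loops included) is an edge, so any map $H\to K_n$ is automatically a homomorphism; it is a strong epimorphism exactly when it is onto and its image meets every edge of $K_n$, and reflexivity of $H$ makes the loop part of that condition automatic once the map is surjective. Hence $K_n\preceq H$ if and only if $V(H)$ can be partitioned into $n$ nonempty blocks such that between every two distinct blocks there is at least one edge of $H$.

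The crux of the argument is the claim that a disjoint edge set of size $\binom{n}{2}$ in $H$ already forces $K_n\preceq H$. Given such a set, I would index the $\binom n2$ two-element subsets $\{i,j\}$ of $\{1,\dots,n\}$ injectively by distinct edges of the set; if the edge $\{a,b\}$ is assigned to $\{i,j\}$, put $a$ into block $i$ and $b$ into block $j$, and finally dump every not-yet-assigned vertex of $H$ into block $1$. Disjointness of the chosen edges guarantees that this is a genuine partition into nonempty blocks (for $i\ge 2$ the block $i$ contains the appropriate endpoint of the edge assigned to $\{1,i\}$), and by construction the edge assigned to $\{i,j\}$ joins block $i$ to block $j$; by the reformulation above this exhibits a strong epimorphism $H\to K_n$.

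It follows that every $H\in\Av(K_n)$ has all of its disjoint edge sets of size at most $\binom n2-1$, a bound depending only on $n$. Corollary~\ref{BoundedIndSetGraphs} — itself a consequence of Theorem~\ref{HigmanGraph} — then says that the class of reflexive graphs whose disjoint edge sets have bounded size is well quasi-ordered under the standard and strong homomorphic image orderings, and since an infinite antichain in $\Av(K_n)$ would be an infinite antichain in that larger class, $\Av(K_n)$ is wqo as well. The only genuinely non-routine point is spotting the quadratic bound $\binom n2$ in the middle step; once it is in hand nothing beyond the preliminary section is needed, and in fact the argument simultaneously establishes the standard-ordering version of the statement.
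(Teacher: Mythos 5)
Your proposal is correct and follows essentially the same route as the paper: both observe that every map into the reflexive complete graph $K_n$ is a homomorphism, that $\binom{n}{2}=\frac{n(n-1)}{2}$ disjoint edges can be mapped onto the edges of $K_n$ to produce a strong epimorphism, and hence that $\Av(K_n)$ lies in the class of graphs with bounded disjoint edge sets, which is wqo by Corollary~\ref{BoundedIndSetGraphs}. Your version merely spells out the partition-into-blocks reformulation and the covering of loops via reflexivity, which the paper leaves implicit.
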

\begin{proof}
Every mapping $\Phi:H \rightarrow K_n$, where $H$ is any graph, is a homomorphism.  Furthermore, if $H$ has a set of $\frac{n(n-1)}{2}$ disjoint edges, these can be mapped onto the edges of $K_n$, ensuring that $\Phi$ is a strong homomorphism, which in turn implies $H \not\in \Av(K_n)$.  Hence $\Av(K_n)$ is contained in the set of all graphs for which sets of disjoint edges have a bound of $\frac{n(n-1)}{2}$ on their size.  This class is wqo by Corollary \ref{BoundedIndSetGraphs}, as required.
\end{proof}

There remains the wqo question for $\Av(N_{n,k})$ with $k>0$.  The authors conjecture that $\Av(N_{n,1})$ is wqo for all $n$.  This is true for $n=3$: indeed, observe that a graph $H$ with $3$ or more vertices can be mapped onto $N_{3,1}$ by a strong homomorphism provided that the graph induced on its set of vertices of degree $\geq 1$ contains a non-edge. Hence $\Av(N_{3,1})$ is contained in the set of all graphs which are disjoint unions of one empty and one complete graph, a set which is wqo by Theorem \ref{HigmanGraph}.  
For $n=4$ we believe that an analogous, but more technical, analysis works.
For larger $n$ the situation for $\Av(N_{n,1})$ involves such increasing technical complications, that a different proof strategy might be needed.  The situation for $\Av(N_{n,k})$ remains open.

\section{Digraphs}
\label{sec-digraphs}

In this section, we establish characterisations of wqo classes within the class $\digraphsclass$ of all digraphs under the two homomorphic image orderings.  Furthermore, by analogy with the graph situation, we also consider the class $\Idigraphsclass$ of irreflexive digraphs  and the class of $\Rdigraphsclass$ of reflexive digraphs.

We begin by defining some distinguished families of digraphs.
For $n\geq 1$ we let $\overrightarrow{K}_n$ denote the \emph{complete digraph} on $n$ vertices; it has vertices $\{1,\dots,n\}$ and (directed) edges
$\{ (i,j) \::\: 1\leq i,j\leq n\}$.  Note that $\overrightarrow{K}_n$ is reflexive by definition.
Removing the loops $(i,i)$ ($1\leq i\leq n$) from $\overrightarrow{K}_n$ yields the 
\emph{irreflexive complete digraph} $\overrightarrow{K}_n^I$.
We denote by $\overrightarrow{\mathcal{K}}$ and $\overrightarrow{\mathcal{K}}_I$
the collections of all complete and complete irreflexive digraphs respectively; observe that the latter is an antichain, since a proper homomorphic image of any member must possess a loop.

By analogy with the subcomplete graphs $N_{n,k}$ from Section \ref{sec-graphs},
we define the \emph{subcomplete digraphs} as follows. For natural numbers $n$ and $k$ with $2k\leq n$, we let $\overrightarrow{N}_{n,k}$ be the digraph obtained from
the complete digraph $\overrightarrow{K}_n$ by taking $k$ disjoint (bidirectional) edges and making them uni-directional. More precisely, $\overrightarrow{N}_{n,k}$ has vertices $\{1,\dots,n\}$ and directed edges
\[
\{ (i,j) \::\: 1\leq i,j \leq n\} \setminus\{ (1,2),(3,4),\dots,(2k-1,2k)\};
\]
see Figure \ref{figNnkdigraph} for illustration.
The subcomplete digraphs $\overrightarrow{N}_{2k,k}$, in which every vertex participates in a uni-directional edge, will be called \emph{proper},
and we let
$\overrightarrow{\mathcal{N}}=\{ \overrightarrow{N}_{2k,k}\::\: k=1,2,3,\dots\} $.
The remaining subcomplete digraphs will be referred to as \emph{partial}.

\tikzset{->-/.style={decoration={
  markings,
  mark=at position #1 with {\arrow[scale=1.5]{>}}},postaction={decorate}}}

\tikzset{-<-/.style={decoration={
  markings,
  mark=at position #1 with {\arrow[scale=1.5]{<}}},postaction={decorate}}}

\begin{figure}

\begin{center}

\begin{tikzpicture}

\node (v1) at (120:1.5) {};
\node (v2) at (60:1.5) {};
\node (v3) at (0:1.5) {} edge (v1) edge (v2);
\node (v4) at (300:1.5) {} edge (v1) edge (v2);
\node (v5) at (240:1.5) {} edge (v1) edge (v2) edge (v3) edge (v4);
\node (v6) at (180:1.5) {} edge (v1) edge (v2) edge (v3) edge (v4) edge (v5);

\begin{scope}[radius=1mm]
\fill (v1) circle;
\fill (v2) circle;
\fill (v3) circle;
\fill (v4) circle;
\fill (v5) circle;
\fill (v6) circle;
\end{scope}

\end{tikzpicture}
\hspace{3mm}
\begin{tikzpicture}

\node (v1) at (120:1.5) {};
\node (v2) at (60:1.5) {} edge [thick,-<-=0.5] (v1);
\node (v3) at (0:1.5) {} edge [->-=0.65,-<-=0.4] (v1) edge [->-=0.7,-<-=0.3] (v2);
\node (v4) at (300:1.5) {} edge [->-=0.65,-<-=0.35] (v1) edge [->-=0.6,-<-=0.4] (v2) edge [thick,-<-=0.5] (v3);
\node (v5) at (240:1.5) {} edge [->-=0.6,-<-=0.4] (v1) edge [->-=0.65,-<-=0.35] (v2) edge [->-=0.65,-<-=0.4] (v3) edge [->-=0.7,-<-=0.3] (v4);
\node (v6) at (180:1.5) {} edge [->-=0.7,-<-=0.3] (v1) edge [->-=0.65,-<-=0.4] (v2) edge [->-=0.65,-<-=0.35] (v3) edge [->-=0.65,-<-=0.4] (v4) edge [->-=0.7,-<-=0.3] (v5);

\begin{scope}[radius=1mm]
\fill (v1) circle;
\fill (v2) circle;
\fill (v3) circle;
\fill (v4) circle;
\fill (v5) circle;
\fill (v6) circle;
\end{scope}

\end{tikzpicture}

\caption{The subcomplete graph $N_{6,2}$ and its directed counterpart $\protect\overrightarrow{N}_{6,2}$.
}

\label{figNnkdigraph}

\end{center}

\end{figure}
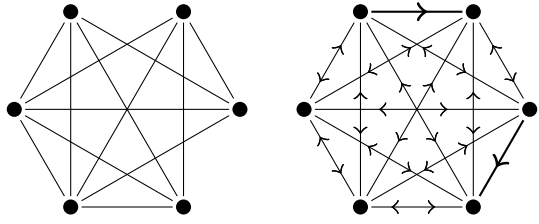

Parallelling Propositions \ref{prop-NC-hom-im} and \ref{propN2kk} we have:

\begin{prop}
\label{prop-NCD-hom-im}
Every proper homomorphic image of a subcomplete digraph (proper or partial) is a partial subcomplete digraph. In particular, the set $\overrightarrow{\mathcal{N}}$ of all proper subcomplete digraphs is an antichain (under both standard and strong homomorphic image orderings).
\end{prop}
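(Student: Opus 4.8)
The plan is to transcribe the proof of Proposition~\ref{prop-NC-hom-im}, using a digraph analogue of ``non-edge''. Call an unordered pair $\{x,y\}$ of distinct vertices of a digraph $D$ a \emph{one-way pair} if exactly one of $(x,y),(y,x)$ lies in $E(D)$, and a \emph{null pair} (i.e.\ $x||y$) if neither does. The intrinsic description underlying everything is: a digraph is subcomplete if and only if it is reflexive, has no null pairs, and every vertex lies in at most one one-way pair. Indeed, under these hypotheses the one-way pairs form a partial matching, say of size $k$, on the $n=|D|$ vertices, and a suitable relabelling (including, within each one-way pair, possibly swapping the two vertices) exhibits an isomorphism with $\overrightarrow{N}_{n,k}$; the converse is immediate. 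Note that, unlike the graph case, two conditions are needed here, since the failure of $\{x,y\}$ to be a bidirectional edge splits into the one-way and the null cases.

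First I would check that all three of these properties pass to surjective homomorphic images, so that the image $H$ of a subcomplete digraph under an epimorphism $f$ is again subcomplete. Reflexivity and the absence of null pairs are immediate: given a vertex (resp.\ a pair of distinct vertices) of $H$, pull it back to $D$ and use that $f$ preserves loops (resp.\ that it preserves edges and $D$ has no null pair). For the matching condition, suppose some $p\in H$ lies in two distinct one-way pairs $\{p,a\}$ and $\{p,b\}$, and choose preimages $u,x,y$ of $p,a,b$; these are pairwise distinct since $p,a,b$ are. The missing direction of $\{p,a\}$ forces (since $f$ preserves edges) the corresponding direction of $\{u,x\}$ to be missing in $D$, and since $D$ has no null pairs $\{u,x\}$ is therefore a one-way pair of $D$; likewise $\{u,y\}$. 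Thus $u$ lies in two distinct one-way pairs of $D$, contradicting that $D$ is subcomplete.

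To prove that the image is \emph{partial} when $f$ is a \emph{proper} epimorphism, I would argue as in Proposition~\ref{prop-NC-hom-im}: some vertex $p$ of $H$ has two preimages $u,v$; for any other vertex $q$ of $H$, pick a preimage $z$, which is distinct from both $u$ and $v$. Since $z$ lies in at most one one-way pair of $D$, at least one of $\{u,z\},\{v,z\}$ is a bidirectional edge of $D$, and applying $f$ then shows $\{p,q\}$ is a bidirectional edge of $H$. Hence $p$ lies in no one-way pair of $H$, so $H$ is a partial subcomplete digraph, as claimed.

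The antichain assertion is then a formality. If $\overrightarrow{N}_{2k,k}\preceq\overrightarrow{N}_{2l,l}$ with $k\neq l$, then, since $|\overrightarrow{N}_{2j,j}|=2j$, we must have $l>k$ and there is an epimorphism $\overrightarrow{N}_{2l,l}\to\overrightarrow{N}_{2k,k}$, which is necessarily proper; the first part then forces $\overrightarrow{N}_{2k,k}$ to be partial, which it is not. So $\overrightarrow{\mathcal{N}}$ is an antichain under the standard homomorphic image ordering, and hence also under the strong one, since every strong epimorphism is an epimorphism. The only genuinely delicate point is the preservation of the ``at most one one-way pair'' property, where one must keep track of edge directions and of the digraph-specific split between one-way and null pairs; the rest is a routine rerun of the graph arguments.
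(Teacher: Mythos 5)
Your proposal is correct and follows essentially the same route as the paper, which simply declares that the proof of Proposition~\ref{prop-NC-hom-im} carries over with ``non-edge'' replaced by ``unidirectional edge''. You merely spell out the details that this substitution elides --- in particular the intrinsic characterisation of subcomplete digraphs and the need to rule out null pairs separately from one-way pairs --- and all of these checks are sound.
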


\begin{proof}
The proof of Proposition \ref{prop-NC-hom-im} holds with `non-edge' replaced by `unidirectional edge' throughout.
\end{proof}

As in the case of graphs, this rapidly leads to the classification of wqo classes under both the standard and strong orderings:

\begin{thm}
\label{thm-digraphsPlain}
Let $D\in\digraphsclass$ be a digraph. The avoidance class $\Av(D)$ in $\digraphsclass$ under the  homomorphic image ordering is well quasi-ordered if and only if $D$ is complete, in which case $\Av(D)$ is finite.
\end{thm}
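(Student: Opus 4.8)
The plan is to mimic the structure of Theorem \ref{thm-Rgraphs}, exploiting Proposition \ref{prop-NCD-hom-im} in place of Proposition \ref{prop-NC-hom-im}. For the forward direction I would argue the contrapositive: suppose $D$ is not complete. If $D$ contains a unidirectional edge or a missing (undirected) pair, I want to show $D$ fails to be a homomorphic image of all sufficiently large members of the antichain $\overrightarrow{\mathcal{N}}$. The cleanest route is to observe that $\overrightarrow{\mathcal{N}}$ is an antichain (Proposition \ref{prop-NCD-hom-im}) and that every proper homomorphic image of $\overrightarrow{N}_{2k,k}$ is a \emph{partial} subcomplete digraph; so if $D$ is itself not a partial subcomplete digraph, then $D$ is a homomorphic image of no $\overrightarrow{N}_{2k,k}$ with $2k > |D|$, and $\Av(D)$ contains a tail of the antichain $\overrightarrow{\mathcal{N}}$, hence is not wqo. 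It remains to handle the case where $D$ \emph{is} a partial subcomplete digraph but not complete — i.e. $D = \overrightarrow{N}_{n,k}$ with $k \geq 1$ and $2k < n$. Here I would produce a direct antichain living in $\Av(\overrightarrow{N}_{n,k})$: for instance, the irreflexive complete digraphs $\overrightarrow{\mathcal{K}}_I$, which the excerpt already notes form an antichain, and which avoid every reflexive $\overrightarrow{N}_{n,k}$ since the latter has loops while no proper homomorphic image of $\overrightarrow{K}_m^I$ does (alternatively, observe a subcomplete digraph $\overrightarrow{N}_{n,k}$ with $k \geq 1$ has a unidirectional edge, whereas $\overrightarrow{K}_m^I$ maps only onto digraphs with a loop). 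Either way, one exhibits an infinite antichain inside $\Av(D)$, completing the contrapositive.

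For the backward direction, suppose $D$ is complete, say $D = \overrightarrow{K}_n$. I claim $\Av(\overrightarrow{K}_n)$ is in fact finite, which trivially gives wqo. The key point is that \emph{every} digraph $H$ admits $\overrightarrow{K}_{|H|}$ — indeed $\overrightarrow{K}_m$ for all $m \leq |H|$ — as a homomorphic image, by collapsing $H$ onto $\{1,\dots,m\}$ surjectively: since $\overrightarrow{K}_m$ is reflexive and complete, every pair (including loops) is an edge, so any surjection is automatically an epimorphism. Hence if $|H| \geq n$ then $\overrightarrow{K}_n \preceq H$, so $H \notin \Av(\overrightarrow{K}_n)$. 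Therefore $\Av(\overrightarrow{K}_n)$ is contained in the (finite) set of digraphs on fewer than $n$ vertices, and is finite.

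The one genuine subtlety — and the step I expect to require the most care — is the first direction in the sub-case where $D$ is a partial subcomplete digraph. One must make sure the antichain chosen to sit inside $\Av(\overrightarrow{N}_{n,k})$ really does avoid $\overrightarrow{N}_{n,k}$ as a homomorphic image; the loop/no-loop distinction for $\overrightarrow{\mathcal{K}}_I$ makes this painless, but one should double-check that the argument covers both the standard and strong orderings uniformly, which it does since a strong epimorphism is in particular an epimorphism, so $\Av$ under the strong ordering is a superset of $\Av$ under the standard ordering — and the non-wqo witnesses are antichains under both (Proposition \ref{prop-NCD-hom-im}), while finiteness is ordering-independent. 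A small bookkeeping remark: one should also confirm that a non-complete $D$ which happens to be neither a subcomplete digraph at all nor of the special partial form is covered by the first bullet of the argument, so that the case split (not a partial subcomplete digraph / a proper partial subcomplete digraph) is exhaustive among non-complete $D$.
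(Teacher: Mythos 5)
Your overall strategy is essentially the paper's: the backward direction is identical (every digraph on at least $n$ vertices surjects onto the reflexive complete digraph $\overrightarrow{K}_n$, and any such surjection is automatically an epimorphism, so $\Av(\overrightarrow{K}_n)$ consists only of digraphs on fewer than $n$ vertices and is finite), and the forward direction uses the same two antichains, $\overrightarrow{\mathcal{N}}$ and $\overrightarrow{\mathcal{K}}_I$, that the paper uses. The only structural difference is the case split: the paper splits on whether $D$ is reflexive (if not, no member of the reflexive family $\overrightarrow{\mathcal{N}}$ can map onto $D$, since reflexivity is preserved by epimorphisms; if so but $D$ is incomplete, use $\overrightarrow{\mathcal{K}}_I$), whereas you split on whether $D$ is a partial subcomplete digraph. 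Both splits are exhaustive over non-complete $D$ and both work; the paper's is marginally lighter since ``reflexivity is preserved'' is a cheaper invariant than invoking Proposition \ref{prop-NCD-hom-im}.

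There is, however, a genuine error in your justification of exactly the step you flagged as delicate. To place $\overrightarrow{\mathcal{K}}_I$ inside $\Av(\overrightarrow{N}_{n,k})$ for $k\geq 1$ you assert that no proper homomorphic image of $\overrightarrow{K}_m^I$ has a loop. This is the reverse of the truth: every proper epimorphic image of $\overrightarrow{K}_m^I$ \emph{does} acquire a loop (if $\phi$ identifies distinct vertices $p,q$, then the edge $(p,q)$ maps to a loop), and indeed that is precisely the paper's stated reason why $\overrightarrow{\mathcal{K}}_I$ is an antichain in the first place. Your alternative formulation --- that $\overrightarrow{K}_m^I$ maps only onto digraphs with a loop --- is true but excludes nothing, since $\overrightarrow{N}_{n,k}$ is reflexive and has loops at every vertex. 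The invariant you actually need, and the one the paper uses, is: in $\overrightarrow{K}_m^I$ every ordered pair of \emph{distinct} vertices is an edge, and this property is inherited by every epimorphic image (distinct image vertices have distinct preimages). Since $\overrightarrow{N}_{n,k}$ with $k\geq 1$ lacks the edge $(1,2)$, it is not an image of any $\overrightarrow{K}_m^I$, so $\overrightarrow{\mathcal{K}}_I\subseteq\Av(\overrightarrow{N}_{n,k})$ as required. With that correction your argument goes through.
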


\begin{proof}
($\Rightarrow$) We show that if $D \in \digraphsclass$ is not complete then $\Av(D)$ is not wqo.

Suppose first that $D$ is not reflexive. 
By definition, 
$\overrightarrow{\mathcal{N}}$ consists of reflexive digraphs,
and is an antichain by Proposition \ref{prop-NCD-hom-im}. 
Since reflexivity is preserved by homomorphic images it follows that
$\overrightarrow{\mathcal{N}}\subseteq \Av(D)$, and so $\Av(D)$ is not wqo.

Next, suppose that $D$ is reflexive. Since it is not complete, there are distinct vertices $u,v$ such that
$(u,v)$ is not an edge in $D$.
Note that for any two distinct vertices $p,q$ in the complete irreflexive digraph $\overrightarrow{K}_n^I$
the pair $(p,q)$ is an edge, and that this property is preserved by epimorphisms.
It follows that the antichain $\overrightarrow{\mathcal{K}}_I$ is contained in $\Av(D)$, and hence $\Av(D)$ is not wqo, as required.

($\Leftarrow$) Let $D$ be a complete digraph.  Observe that any digraph with at least $|D|$ vertices can be mapped onto $D$ via a standard homomorphism.  Hence $\mathrm{Av}(D)=\{E: |E|<|D|\}$; this set is finite and therefore wqo.
\end{proof}

\begin{thm}
\label{thm-digraphsStrong}
Let $D\in\digraphsclass$ be a digraph. The avoidance class $\Av(D)$ in $\digraphsclass$ under the  strong homomorphic image ordering is well quasi-ordered if and only if $D$ is complete.
\end{thm}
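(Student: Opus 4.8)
The plan is to follow the template of the proof of Theorem~\ref{thm-digraphsPlain}: the forward direction transfers essentially verbatim, while for the backward direction I would argue as in the proof that $\Av(K_n)$ is well quasi-ordered for reflexive graphs under the strong ordering, with a bound adapted to the directed reflexive setting.

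\emph{Forward direction.} Here ``$D$ complete'' means $D\cong\overrightarrow{K}_n$ for some $n$, so assume this fails; then, as in Theorem~\ref{thm-digraphsPlain}, either $D$ is not reflexive, or $D$ is reflexive but some non-loop pair is missing. Since every strong epimorphism is an epimorphism, the strong homomorphic image order is contained in the standard one; hence every antichain under the standard order is also an antichain under the strong order, and $\Av(D)$ computed with respect to the standard order is contained in $\Av(D)$ computed with respect to the strong order. Therefore the two infinite antichains used in the proof of Theorem~\ref{thm-digraphsPlain} --- $\overrightarrow{\mathcal{N}}$ when $D$ is not reflexive (it consists of reflexive digraphs and reflexivity is preserved by homomorphic images), and $\overrightarrow{\mathcal{K}}_I$ when $D$ is reflexive but not complete (``all distinct pairs are edges'' is preserved by epimorphisms) --- still lie inside $\Av(D)$ under the strong order, so $\Av(D)$ is not well quasi-ordered.

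\emph{Backward direction.} Let $D=\overrightarrow{K}_n$. Note first that, in contrast to the standard case, $\Av(\overrightarrow{K}_n)$ is now infinite (it contains, e.g., every edgeless digraph), so the ``count the vertices'' argument from Theorem~\ref{thm-digraphsPlain} fails and a different bound is needed. The key claim I would prove is: \emph{if a digraph $E$ contains a disjoint edge set of size $n^2$, then $\overrightarrow{K}_n\preceq E$ under the strong ordering.} To see this, observe that $\overrightarrow{K}_n$ has exactly $n^2$ edges, namely the $n$ loops $(p,p)$ together with the $n(n-1)$ non-loop ordered pairs; list them as $e_1,\dots,e_{n^2}$ and list the disjoint edges of $E$ as $(a_1,b_1),\dots,(a_{n^2},b_{n^2})$. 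Define $\phi\colon E\to\overrightarrow{K}_n$ by $\phi(a_r)=\phi(b_r)=p$ if $e_r=(p,p)$, and $\phi(a_r)=p$, $\phi(b_r)=q$ if $e_r=(p,q)$ with $p\neq q$; the $2n^2$ endpoints involved are pairwise distinct, so this is consistent, and $\phi$ is extended to the remaining vertices of $E$ arbitrarily. Every mapping into $\overrightarrow{K}_n$ is a homomorphism, $\phi$ is surjective, and each edge $e_r$ of $\overrightarrow{K}_n$ is the image of the edge $(a_r,b_r)$ of $E$, so $\phi$ is a strong epimorphism. Consequently $\Av(\overrightarrow{K}_n)$ is contained in the class of all digraphs whose disjoint edge sets have size at most $n^2-1$, which is well quasi-ordered under the standard and strong homomorphic image orderings by Corollary~\ref{BoundedIndSet}; hence $\Av(\overrightarrow{K}_n)$ is well quasi-ordered.

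\emph{Main obstacle.} The only genuinely new point, and the one deserving care, is the treatment of the loops of $\overrightarrow{K}_n$: a strong epimorphism must cover \emph{every} edge of its image, yet a digraph $E$ may be irreflexive and so contain no loops at all. The resolution is that a loop $(p,p)$ of $\overrightarrow{K}_n$ can still be realised by collapsing a genuine non-loop edge of $E$ to a single vertex; this is exactly why a disjoint \emph{edge} set is the right resource, and why the threshold is $n^2$ rather than the $\binom{n}{2}$ that sufficed for reflexive graphs.
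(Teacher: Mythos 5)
Your proposal is correct and follows essentially the same route as the paper: the forward direction reuses the two antichains $\overrightarrow{\mathcal{N}}$ and $\overrightarrow{\mathcal{K}}_I$ from Theorem~\ref{thm-digraphsPlain} (the paper simply declares this direction identical), and the backward direction maps $n^2$ disjoint edges onto the $n^2$ directed edges of $\overrightarrow{K}_n$ (loops included) and invokes Corollary~\ref{BoundedIndSet}. Your explicit discussion of why loops of $\overrightarrow{K}_n$ must be covered by collapsed edges, forcing the threshold $n^2$, is a correct elaboration of a point the paper leaves implicit.
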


\begin{proof}
($\Rightarrow$) 
This proof is identical to the corresponding direction in Theorem \ref{thm-digraphsPlain}.

($\Leftarrow$) Let us consider $\mathrm{Av}(\overrightarrow{K_n})$ for arbitrary $n$.  Every digraph $E$ with a set $(a_1,b_1),\ldots,(a_{n^2},b_{n^2})$ of $n^2$ disjoint edges can be mapped onto $\overrightarrow{K_n}$ via a strong homomorphism (any mapping $E \rightarrow \overrightarrow{K_n}$ which sends $(a_1,b_1), \ldots, (a_{n^2},b_{n^2})$ onto the $n^2$ directed edges of $\overrightarrow{K_n}$ is such a homomorphism).  Hence, the size of a maximal disjoint edge set in a member of $\mathrm{Av}(\overrightarrow{K_n})$ is uniformly bounded by $n^2$, implying that $\mathrm{Av}(\overrightarrow{K_n})$ is wqo by Corollary \ref{BoundedIndSet}.
\end{proof}

\vspace{2mm}
\noindent
\textbf{Irreflexive digraphs}
\vspace{2mm}

%Next, we consider \emph{irreflexive digraphs}, in which no loops are permitted.  
The situation with irreflexive digraphs is precisely analogous to the case of irreflexive graphs.  Specifically, the complete irreflexive digraphs have no proper homomorphic images, and every irreflexive digraph embeds into the irreflexive complete digraph of the same size under standard homomorphism.  This readily leads to the following characterisations:

\begin{thm}
\label{thm-Idigraphs}
\begin{itemize}
\item[(1)] A downward closed class $\mathcal{C}\subseteq\Idigraphsclass$ of irreflexive digraphs under the homomorphic image ordering is well quasi-ordered if and only if it is finite.
\item[(2)]
A class $\mathcal{C}\subseteq\Idigraphsclass$ of irreflexive digraphs defined by finitely many obstructions under either the standard or strong homomorphic image ordering is never well quasi-ordered.
\end{itemize}
\end{thm}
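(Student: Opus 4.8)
The plan is to follow the argument for irreflexive graphs in Theorem~\ref{thm-Igraphs} essentially verbatim, with the complete irreflexive digraphs $\overrightarrow{K}_n^I$ in place of the complete graphs. Two observations do all the work. First, as already noted above, $\overrightarrow{\mathcal{K}}_I$ is an infinite antichain under \emph{both} the standard and the strong homomorphic image orderings: an epimorphism out of $\overrightarrow{K}_n^I$ which is not injective identifies two distinct vertices $p,q$, and since $(p,q)$ is an edge of $\overrightarrow{K}_n^I$ its image carries a loop and so is not irreflexive; consequently the only irreflexive homomorphic image (standard or strong) of $\overrightarrow{K}_n^I$ is $\overrightarrow{K}_n^I$ itself. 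Secondly, for every irreflexive digraph $D$ on $n$ vertices the identity map is a standard epimorphism $D\to\overrightarrow{K}_n^I$ --- irreflexivity of $D$ means there are no loops to collapse, and $\overrightarrow{K}_n^I$ contains every non-loop directed edge --- so $\overrightarrow{K}_n^I\preceq D$.

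For the forward direction of~(1) I would start from a wqo downward closed class $\mathcal{C}\subseteq\Idigraphsclass$, written as $\Av(O_i:i\in I)$ for suitable irreflexive digraphs $O_i$. Being wqo, $\mathcal{C}$ meets the antichain $\overrightarrow{\mathcal{K}}_I$ in only a finite set, so there is an $m$ with $\overrightarrow{K}_j^I\notin\mathcal{C}$ whenever $j>m$; for each such $j$ some $O_i$ is a homomorphic image of $\overrightarrow{K}_j^I$, hence $O_i=\overrightarrow{K}_j^I$ by the first observation. Thus $\{O_i:i\in I\}$ contains $\overrightarrow{K}_j^I$ for every $j>m$, and then the second observation forces every irreflexive digraph on more than $m$ vertices out of $\mathcal{C}$, so $\mathcal{C}$ is finite; the converse is immediate. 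For~(2), the identical reasoning shows that if $\mathcal{C}=\Av(O_1,\dots,O_k)$ were wqo then $\{O_1,\dots,O_k\}$ would have to contain $\overrightarrow{K}_j^I$ for all sufficiently large $j$ --- infinitely many digraphs --- which is absurd. Since the only properties of $\overrightarrow{\mathcal{K}}_I$ used here (it is an antichain and has no proper irreflexive homomorphic images) hold verbatim for the strong ordering, the argument for~(2) goes through unchanged for that ordering too, even though~(1) itself fails there --- for instance the empty irreflexive digraphs $\overline{K}_1,\overline{K}_2,\dots$ form an infinite downward closed wqo class under the strong ordering.

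This is little more than bookkeeping, so I do not anticipate a genuine obstacle. The one point deserving a moment's care is the implication ``$O_i$ is a homomorphic image of $\overrightarrow{K}_j^I$'' $\Rightarrow$ ``$O_i=\overrightarrow{K}_j^I$'': here one checks that an epimorphism (standard or strong) out of $\overrightarrow{K}_j^I$ is either a bijection, in which case its irreflexive image must be $\overrightarrow{K}_j^I$, or else collapses two vertices spanning an edge and hence produces a loop in the image; so an irreflexive target can only be $\overrightarrow{K}_j^I$ itself. Granting that, the theorem follows.
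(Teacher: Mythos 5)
Your proof is correct and is essentially the argument the paper intends: the paper proves this theorem by noting that complete irreflexive digraphs have no proper homomorphic images (so $\overrightarrow{\mathcal{K}}_I$ is an antichain under both orderings) and that every irreflexive digraph maps onto the complete irreflexive digraph of the same size, exactly your two observations, and then transports the proof of Theorem~\ref{thm-Igraphs} verbatim. No discrepancies.
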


\vspace{2mm}
\noindent
\textbf{Reflexive digraphs}
\vspace{2mm}

We now turn our attention to \emph{reflexive digraphs}, i.e. those digraphs in which $(v,v)$ is an edge for every vertex $v$.  We will need another corollary of Theorem \ref{HigmanDigraph}, similar to Corollary \ref{BoundedIndSet}.  A pair of vertices $a,b$ in a digraph $D$ will be called \emph{partial}  if $a \neq b$ and $a \not \leftrightarrow b$ (i.e. if precisely one of the following holds: $a\rightarrow b$ or $a \leftarrow b$ or $a || b$).  Partial pairs $(a_1,b_1),\ldots,(a_k,b_k)$ are said to be \emph{disjoint} if all $a_1,\ldots,a_k,b_1,\ldots,b_k$ are distinct.

\begin{lemma}\label{BoundedPartialSet}
Any class of digraphs for which there is a uniform bound on the sizes of sets of disjoint partial pairs is well quasi-ordered.
\end{lemma}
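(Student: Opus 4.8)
The plan is to deduce this from Theorem \ref{HigmanDigraph} in exactly the way Corollary \ref{BoundedIndSet} was deduced, but with the roles of the empty block $D_e$ and the complete block $D_c$ interchanged: a bound on disjoint partial pairs should force the ``generic'' part of a digraph to be complete, just as a bound on disjoint edge sets forced it to be empty.

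In detail, let $N$ be the uniform bound on the sizes of sets of disjoint partial pairs in members of the given class, and let $D$ be an arbitrary digraph in the class (recall that in this subsection all digraphs are reflexive). I would choose a maximal set of pairwise disjoint partial pairs $(a_1,b_1),\dots,(a_k,b_k)$ in $D$; by hypothesis $k\leq N$. Then set $D_f=\{a_1,\dots,a_k,b_1,\dots,b_k\}$, so that $|D_f|=2k\leq 2N$, set $D_c=D\setminus D_f$, and set $D_e=\emptyset$.

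The only substantive point is that the subdigraph induced on $D_c$ is complete: if some pair of distinct vertices $u,v\in D_c$ failed to satisfy $u\leftrightarrow v$, then $\{u,v\}$ would be a partial pair disjoint from $D_f$, contradicting maximality. Hence all distinct pairs in $D_c$ are bidirectional edges, and, $D$ being reflexive, $D_c$ induces a complete reflexive digraph. The remaining hypotheses of Theorem \ref{HigmanDigraph} hold trivially since $D_e=\emptyset$: the digraph induced on $D_e$ is (vacuously) empty, and the uniformity condition on the connections between $D_e$ and $D_c$ is vacuous. Thus $D\in\mathcal{T}_{2N}$, and since $D$ was arbitrary the whole class is contained in $\mathcal{T}_{2N}$.

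Finally, $\mathcal{T}_{2N}$ is well quasi-ordered under both the standard and strong homomorphic image orderings by Theorem \ref{HigmanDigraph}, and a subclass of a wqo class is again wqo, since any infinite antichain in the subclass is an infinite antichain in the larger class; this gives the result. I do not expect any genuine obstacle here: the only things to get right are the direction of the duality with Corollary \ref{BoundedIndSet} and the use of reflexivity, which is exactly what lets the complement block land in $\mathcal{T}_{2N}$ rather than in a ``complete irreflexive'' analogue not covered by Theorem \ref{HigmanDigraph}.
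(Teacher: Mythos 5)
Your proposal is correct and follows essentially the same route as the paper: a maximal set of disjoint partial pairs gives $D_f$ of size at most $2N$, maximality forces the complement to induce a complete (reflexive) digraph, and with $D_e=\emptyset$ the decomposition places $D$ in $\mathcal{T}_{2N}$, which is wqo by Theorem~\ref{HigmanDigraph}. The paper's own proof is the same argument, stated slightly more tersely.
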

\begin{proof}
Let $N \in \mathbb{N}$ and let $\mathcal{C}$ be a class of digraphs such that sets of disjoint partial pairs are all of size $\leq N$.  We prove that $\mathcal{C} \subseteq \mathcal{T}_{2N}$, which is wqo by Theorem \ref{HigmanDigraph}.  Let $D \in \mathcal{C}$, and let $(a_1,b_1), \ldots,(a_k,b_k)$ ($k \leq N$) be a maximal set of disjoint partial pairs in $D$.  Let $D_f=\{a_1,\ldots,a_k,b_1,\ldots,b_k \}$; clearly $|D_f|\leq 2N$.  By maximality, all pairs of vertices outside of $D$ are bidirectionally connected, so letting $D_c$ comprise all the vertices of $D$ not in $D_f$, and setting $D_e=\emptyset$, yields a decomposition of $D$ which meets the conditions from Theorem \ref{HigmanDigraph}.  Hence $D \in \mathcal{T}_{2N}$, as required.
\end{proof}

\begin{thm}
\label{thm-Rdigraphs}
Let $D\in\Rdigraphsclass$ be a reflexive digraph. The avoidance class $\Av(D)$ in $\Rdigraphsclass$ under the homomorphic image ordering is well quasi-ordered if and only if $D$ is a partial subcomplete digraph.
\end{thm}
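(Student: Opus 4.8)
The plan is to mirror the proof of Theorem~\ref{thm-Rgraphs}, with $\overrightarrow{N}_{n,k}$, $\overrightarrow{\mathcal{N}}$, Proposition~\ref{prop-NCD-hom-im} and Lemma~\ref{BoundedPartialSet} playing the roles of $N_{n,k}$, $\mathcal{N}$, Proposition~\ref{prop-NC-hom-im} and Theorem~\ref{HigmanGraph}.

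For the forward direction I would prove the contrapositive: if $D$ is not a partial subcomplete digraph, then $\Av(D)$ is not wqo. By Proposition~\ref{prop-NCD-hom-im} every proper homomorphic image of any $\overrightarrow{N}_{2k,k}$ is a partial subcomplete digraph, so such a $D$ can be a homomorphic image of $\overrightarrow{N}_{2k,k}$ only when $D=\overrightarrow{N}_{2k,k}$, i.e.\ only when $D$ is itself a proper subcomplete digraph. Hence, whether $D$ is not subcomplete at all or is a proper subcomplete digraph, $\Av(D)$ contains all but at most one member of the antichain $\overrightarrow{\mathcal{N}}$, and so is not wqo. (This repeats the forward arguments of Theorems~\ref{thm-Rgraphs} and~\ref{thm-RgraphsStrong}.)

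For the backward direction, let $D=\overrightarrow{N}_{n,k}$ with $2k<n$; the case $k=0$ is immediate, since then $D=\overrightarrow{K}_n$ and $\Av(D)=\{H\in\Rdigraphsclass : |H|<n\}$ is finite, so assume $k\ge 1$. I would show that $\Av(\overrightarrow{N}_{n,k})$ is contained in $\mathcal{F}\cup\mathcal{B}$, where $\mathcal{F}$ is the finite set of all reflexive digraphs on fewer than $n$ vertices and $\mathcal{B}$ is the class of reflexive digraphs in which every set of pairwise disjoint partial pairs has size at most $k-1$. For this it is enough to check that any reflexive digraph $H$ with $|H|\ge n$ containing $k$ pairwise disjoint partial pairs $\{a_i,b_i\}$ ($i=1,\dots,k$) admits an epimorphism onto $\overrightarrow{N}_{n,k}$: map $\{a_i,b_i\}$ onto $\{2i-1,2i\}$, choosing the orientation so that, when $\{a_i,b_i\}$ carries a single arc, that arc is sent onto the edge $(2i,2i-1)$ of $\overrightarrow{N}_{n,k}$, and map the remaining $\ge n-2k$ vertices of $H$ surjectively onto the full vertices $\{2k+1,\dots,n\}$. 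This is a homomorphism --- loops map to loops, arcs between distinct pairs and arcs incident with a full vertex land among the bidirectional edges of $\overrightarrow{N}_{n,k}$, and the unique missing arc $(2i-1,2i)$ is never used because $\{a_i,b_i\}$ is partial --- and it is surjective, so $H\notin\Av(\overrightarrow{N}_{n,k})$. This gives the inclusion; then $\mathcal{B}$ is wqo by Lemma~\ref{BoundedPartialSet}, $\mathcal{F}$ is finite, a finite union of wqo sets is wqo, and a subset of a wqo set is wqo, so $\Av(\overrightarrow{N}_{n,k})$ is wqo.

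The step requiring care is the homomorphism construction: a partial pair can have any of the three shapes $a_i\to b_i$, $a_i\leftarrow b_i$, $a_i\,||\,b_i$, and the map onto $\{2i-1,2i\}$ must be oriented so that no arc of $H$ is ever sent onto the absent arc $(2i-1,2i)$. This is precisely the point where the standard ordering is more forgiving than the strong one --- a standard homomorphism need not realise every arc of $\overrightarrow{N}_{n,k}$ as an image --- which is why this argument succeeds whereas the strong analogue (cf.\ Theorem~\ref{thm-RgraphsStrong} and the open questions surrounding $\Av(N_{n,k})$) is genuinely harder.
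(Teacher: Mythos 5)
Your proof is correct and follows essentially the same route as the paper: the forward direction via Proposition~\ref{prop-NCD-hom-im} and the antichain $\overrightarrow{\mathcal{N}}$, and the backward direction by bounding the number of disjoint partial pairs in members of $\Av(\overrightarrow{N}_{n,k})$ and invoking Lemma~\ref{BoundedPartialSet}. Your explicit treatment of how to orient each partial pair onto $\{2i-1,2i\}$ fills in a detail the paper leaves implicit, but the argument is the same.
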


\begin{proof}
($\Rightarrow$) This is a consequence of Proposition \ref{prop-NCD-hom-im}.  Indeed, if $\Av(D)$ is wqo, then $\Av(D) \cap \overrightarrow{\mathcal{N}}$ is finite.  Hence $D$ is a homomorphic image of infinitely many members of   $\overrightarrow{\mathcal{N}}$.  But proper homomorphic images of members of $\overrightarrow{\mathcal{N}}$ are precisely partial subcomplete digraphs. (This also holds for the strong order).

($\Leftarrow$) Let us consider $\Av(\overrightarrow{N}_{n,k})$ with $2k<n$.  Every digraph $E$ of size $\geq n$ with $k$ disjoint partial pairs $(a_1,b_1),\ldots,(a_k,b_k)$ can be homomorphically mapped onto $\overrightarrow{N}_{n,k}$.  To see this, map $(a_1,b_1),\ldots,(a_k,b_k)$ onto the $k$ unidirectional edges $(1,2), \ldots, (2k-1,2k)$ of $\overrightarrow{N}_{n,k}$ and extend to an epimorphism arbitrarily; note that this need not be strong.  Hence the size of a maximal set of disjoint partial pairs is bounded by $k$, and the result follows by Lemma \ref{BoundedPartialSet}.
\end{proof}

Finally, we turn to reflexive digraphs under the strong homomorphic image ordering, where, if one followed the parallel with graphs that is emerging, one would expect difficulties in determining the status of the classes of the form $\Av(\overrightarrow{N}_{n,k})$. Interestingly, this turns out not to be the case: 

\begin{thm}
\label{thm-RdigraphsStrong}
Let $D\in\Rdigraphsclass$ be a reflexive digraph.
Then the avoidance class $\Av(D)$ in $\Rdigraphsclass$ under the strong homomorphic image ordering is well quasi-ordered if and only if $D$ is a complete digraph.
\end{thm}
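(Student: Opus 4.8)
The plan is to establish the two directions separately, with the ($\Rightarrow$) direction being essentially a repackaging of facts already available and the ($\Leftarrow$) direction being the substantive new content. For ($\Rightarrow$), suppose $D$ is not a complete digraph. If $D$ is not reflexive, it has no reflexive preimages so $\overrightarrow{\mathcal{N}}\subseteq\Av(D)$ and we are done by Proposition \ref{prop-NCD-hom-im}; but $D$ is assumed reflexive here. If $D$ is reflexive but not complete, then $D$ has a partial pair, so $D$ is not a complete digraph and in particular some $\overrightarrow{N}_{2k,k}$ with $2k>|D|$ cannot be a strong (or even standard) homomorphic image of $D$ — more to the point, I will argue that $D$ fails to be a strong homomorphic image of almost all members of $\overrightarrow{\mathcal{N}}$, exactly as in Theorem \ref{thm-RgraphsStrong} / the forward direction of Theorem \ref{thm-Rdigraphs}, which "holds for the strong order" by the parenthetical in that proof. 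Hence $\overrightarrow{\mathcal{N}}\cap\Av(D)$ is infinite and $\Av(D)$ is not wqo. This covers every non-complete $D$.

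For ($\Leftarrow$), let $D=\overrightarrow{K}_n$ be a complete reflexive digraph; I must show $\Av(\overrightarrow{K}_n)$ is wqo under the \emph{strong} ordering. This is exactly the argument in the ($\Leftarrow$) direction of Theorem \ref{thm-digraphsStrong}: any digraph $E$ possessing a set of $n^2$ pairwise disjoint edges $(a_1,b_1),\dots,(a_{n^2},b_{n^2})$ admits a strong homomorphism onto $\overrightarrow{K}_n$ — send those $n^2$ disjoint edges bijectively onto the $n^2$ directed edges (loops included) of $\overrightarrow{K}_n$, and extend the map arbitrarily on the remaining vertices; the resulting map is a homomorphism because \emph{every} map into a complete digraph is, and it is strong because every edge of $\overrightarrow{K}_n$ is already hit. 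Therefore $\Av(\overrightarrow{K}_n)$ is contained in the class of digraphs whose disjoint edge sets have size at most $n^2$, which is wqo by Corollary \ref{BoundedIndSet}. Since in $\Rdigraphsclass$ the only complete digraphs are the $\overrightarrow{K}_n$, this finishes the direction.

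The subtle point — and the reason this theorem is worth stating separately from the graph case — is why the obstruction "$\Av(\overrightarrow{N}_{n,k})$ might be wqo for partial subcomplete $D$" that muddies the reflexive-graph picture simply does not arise here: in the digraph setting a \emph{partial} subcomplete digraph $\overrightarrow{N}_{n,k}$ with $2k<n$ has a vertex of in- and out-degree $n-1$ not participating in any unidirectional edge, and one checks it is still a strong homomorphic image of arbitrarily large $\overrightarrow{N}_{m,k}\in\overrightarrow{\mathcal{N}}$ (map the $k$ unidirectional edges onto the $k$ unidirectional edges and collapse the rest onto the "full" vertices, which now works strongly because all the surviving bidirectional edges of the target are covered). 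Hence $\overrightarrow{\mathcal{N}}$ itself sits in $\Av(\overrightarrow{N}_{n,k})$, so even the partial subcomplete digraphs fail to give wqo classes under the strong order. Verifying this last claim carefully — that the collapsing map onto a partial $\overrightarrow{N}_{n,k}$ is genuinely strong, i.e. that every bidirectional edge of the target is the image of a bidirectional edge — is the one place requiring a short case check, and it is the main (though modest) obstacle; everything else is assembled from Corollary \ref{BoundedIndSet}, Proposition \ref{prop-NCD-hom-im}, and the arguments already given for Theorems \ref{thm-digraphsStrong} and \ref{thm-Rdigraphs}.
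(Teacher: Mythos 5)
Your backward direction is correct and is exactly the paper's argument (bounded disjoint edge sets plus Corollary \ref{BoundedIndSet}), but the forward direction has a genuine gap, and your own final paragraph exposes it. The antichain $\overrightarrow{\mathcal{N}}$ only rules out those $D$ that are not partial subcomplete digraphs (this is the forward direction of Theorem \ref{thm-Rdigraphs}, and it does transfer to the strong order). It does \emph{not} handle $D\cong\overrightarrow{N}_{n,k}$ with $2k<n$ and $k\geq 1$: as you correctly verify, such a $D$ \emph{is} a strong homomorphic image of every sufficiently large $\overrightarrow{N}_{2m,m}$ (map the $k$ unidirectional edges onto the $k$ unidirectional edges and send the remaining partner-pairs of the source to single full vertices of the target). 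But you then draw exactly the wrong conclusion: if $\overrightarrow{N}_{n,k}\preceq\overrightarrow{N}_{2m,m}$ then $\overrightarrow{N}_{2m,m}\notin\Av(\overrightarrow{N}_{n,k})$, so $\overrightarrow{\mathcal{N}}$ lies almost entirely \emph{outside} $\Av(\overrightarrow{N}_{n,k})$ rather than inside it. Your computation therefore shows that $\overrightarrow{\mathcal{N}}$ is useless for witnessing non-wqo of $\Av(\overrightarrow{N}_{n,k})$, and it directly contradicts your earlier assertion that ``$D$ fails to be a strong homomorphic image of almost all members of $\overrightarrow{\mathcal{N}}$'' for every non-complete $D$. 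The hard case of the theorem --- showing $\Av(\overrightarrow{N}_{n,k})$ is not wqo for $k\geq 1$ --- is left unproved.

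The missing idea is a \emph{second} antichain. The paper takes the proper subcomplete graphs $\mathcal{N}$ of Proposition \ref{propN2kk} and views each undirected edge as a pair of opposite directed edges, obtaining a family $\overrightarrow{\mathcal{N}'}$ of symmetric reflexive digraphs; this is an antichain under the strong ordering because collapsing two vertices produces a vertex bidirectionally joined to all others. Every homomorphic image of a member of $\overrightarrow{\mathcal{N}'}$ has the property that any two vertices are joined either bidirectionally or not at all, so no $\overrightarrow{N}_{n,k}$ with $k\geq 1$ (which contains a unidirectional edge) can be such an image. Hence $\overrightarrow{\mathcal{N}'}\subseteq\Av(\overrightarrow{N}_{n,k})$ and the class is not wqo. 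Without some antichain of this kind your proof of the forward direction does not close.
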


\begin{proof}
($\Rightarrow$) Suppose $\Av(D)$ is wqo.  As in the proof of Theorem \ref{thm-Rdigraphs}, $D$ must be a partial subcomplete digraph, say $D \cong \overrightarrow{N}_{n,k}$ with $2k<n$.  

To see that, in fact, $D$ must be complete, we need to consider another antichain.  Take the family $\mathcal{N}$ of proper subcomplete (undirected) graphs,
which is an antichain by Proposition \ref{propN2kk}, and view its members as directed graphs by interpreting every edge $\{a,b\}$ as a pair of directed edges $(a,b)$ and $(b,a)$.  The resulting collection of digraphs $\overrightarrow{\mathcal{N}^{\prime}}$ is an antichain, because identifying two distinct vertices via a homomorphism results in a vertex bidirectionally connected to all others.  Hence $\Av(D)$ can contain only finitely many members of $\overrightarrow{\mathcal{N}^{\prime}}$, implying that $D$ is a strong homomorphic image of infinitely many members of $\overrightarrow{\mathcal{N}^{\prime}}$.  Observe that, in any homomorphic image of a member of $\overrightarrow{\mathcal{N}^{\prime}}$, any pair of vertices $a,b$ is connected either bidirectionally or not at all.  This implies, in particular, that a subcomplete digraph  $\overrightarrow{N}_{n,k}$ can be such a homomorphic image only if it is complete, i.e. $k=0$.

($\Leftarrow$) The proof that $\Av(\overrightarrow{K_n})$ is wqo is identical to the corresponding part of the proof for general digraphs (Theorem \ref{thm-digraphsStrong}).
\end{proof}

We observe that the subcomplete digraphs, which have played a key role in this section, possess the property that their underlying graphs are complete, whereas the antichain  $\overrightarrow{\mathcal{N}^{\prime}}$ used in the proof of Theorem \ref{thm-RdigraphsStrong} does not possess this property.  It would be interesting to ask our wqo questions in the context of the class of all digraphs whose underlying graphs are complete.  This class includes all subcomplete digraphs and all tournaments.

\section{Tournaments}
\label{sec-tournaments}

In this section we consider the wqo problem for the class of tournaments.
As with graphs and digraphs, the choice of model affects which mappings qualify as homomorphisms. In fact, for tournaments, the irreflexive option is not particularly interesting: all homomorphisms are injective and the homomorphic image orderings reduce to equality. Also, for tournaments, homomorphisms and strong homomorphisms coincide.
So, we consider the class $\Rtournamentsclass$ of reflexive tournaments under the homomorphic image ordering.
It is perhaps mildly intriguing that even for this class the wqo problem can be completely solved in a trivial way: no class defined by finitely many obstructions is wqo under the homomorphic image ordering.

To prove this, we will require the following family of tournaments from \cite{HucRus15}. 
Let $n \in \mathbb{N}$ be odd.  The tournament $T_n$ on $n$ vertices $\{1,\ldots,n\}$ is given by the rule: for $1 \leq i < j \leq n$,
\[ \begin{array}{lcr} i \to j & \mbox{if} & i \not\equiv j \pmod 2,\\
j \to i & \mbox{if} & i \equiv j \pmod 2. \end{array} \]
The following result was observed in \cite{HucRus15}; for completeness we give a full proof here.

\begin{prop}
For any (odd) $n\geq 3$, the only homomorphic images of the tournament $T_n$ are $T_n$ itself and the one-element tournament.
\end{prop}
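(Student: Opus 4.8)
The plan is to analyze an arbitrary epimorphism $\phi:T_n\to S$ and show that either $\phi$ is injective (so $S\cong T_n$) or $\phi$ is constant (so $S$ is the one-element tournament). Since every homomorphism of a reflexive tournament to itself or to a smaller tournament is determined by which pairs of vertices get identified, the crux is to understand when two distinct vertices $i,j$ of $T_n$ can be collapsed. First I would record the key structural feature of $T_n$: between any two vertices of the same parity the arc points from the larger to the smaller, and between vertices of different parity the arc points from the smaller to the larger. In particular $T_n$ contains no "diamond": one checks (this is the routine local computation I would not grind through) that as soon as $\phi$ identifies two vertices $a\ne b$, it is forced to identify a great many more, propagating through the whole vertex set.

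The main step is the following claim: if a homomorphism $\phi$ identifies two distinct vertices $a$ and $b$, then $\phi$ is constant. To prove it, suppose $\phi(a)=\phi(b)=v$ with $a<b$. For any third vertex $c$, the images $\phi(a),\phi(b),\phi(c)$ must be consistently oriented, which forces the arcs $a\!-\!c$ and $b\!-\!c$ of $T_n$ to point "the same way" relative to $\{a,b\}$ — i.e. either both into $\{a,b\}$ or both out. Using the explicit parity rule for $T_n$, I would show that for \emph{every} choice of $a<b$ there is some $c$ for which the arcs $a\!-\!c$ and $b\!-\!c$ point in opposite directions \emph{unless} $c$ is itself forced to be identified with $a$ and $b$. (Concretely: if $a,b$ have the same parity, take $c$ strictly between them of the opposite parity, so $a\to c$ but $c\to b$; if $a,b$ have opposite parity one argues similarly with a suitable $c$, and the boundary cases $b=a+1$ get handled by going outside the interval $[a,b]$.) Iterating this forces all vertices to collapse to a single point, establishing the claim. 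Hence if $\phi$ is not constant it is injective, and an injective epimorphism between finite structures of the same size is an isomorphism, giving $S\cong T_n$.

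The main obstacle I anticipate is the bookkeeping in the claim: one must show that for \emph{no} pair $\{a,b\}$ can the "both arcs point the same way" condition be satisfied for all $c$ simultaneously, which requires checking several parity cases for $a,b$ and handling the vertices near the ends of $\{1,\dots,n\}$ where the obvious choice of intermediate $c$ is unavailable. The oddness of $n$ and the condition $n\ge 3$ should be exactly what makes the argument go through — I would keep an eye on where these hypotheses are used (they guarantee enough room to pick a separating vertex $c$, and rule out the degenerate $n=1$ case). Everything else — transitivity of "$\phi$ identifies", the reduction of epimorphisms to vertex identifications, and the final injective-implies-isomorphism step — is routine given the definitions in Section~\ref{sec-prelim}.
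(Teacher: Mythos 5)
Your proposal is correct and follows essentially the same route as the paper: the ``squeeze'' you describe (a vertex $c$ whose arcs to $a$ and $b$ point in opposite directions relative to $\{a,b\}$ is forced to be identified with them) is exactly the paper's observation that identifying two vertices of a directed triangle of $T_n$ forces the third, and both arguments then propagate the collapse through chains of such triples, with oddness of $n$ used only to exclude the problematic opposite-parity pair $\{1,n\}$. One small correction to your case analysis: when $a\not\equiv b\pmod 2$, \emph{no} vertex $c$ strictly between $a$ and $b$ separates them (for every such $c$ both arcs point the same way relative to $\{a,b\}$), so one must always go outside the interval $[a,b]$ in that case, not only when $b=a+1$; this is harmless, since an exterior $c$ always separates and exists unless $\{a,b\}=\{1,n\}$, which oddness of $n$ forbids for opposite-parity pairs.
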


\begin{proof} 
For $n=3$, $T_n$ is a $3$-cycle, which clearly has no proper non-trivial homomorphic image.  So consider $n\geq 5$, and suppose $f: T_n \rightarrow T$ is a proper homomorphism onto a tournament $T$.  Note that directed triangles in $T_n$ correspond to triples $a<b<c$ with $a \not \equiv b$, $b \not \equiv c$ (and consequently $a \equiv c$),
where all congruences are being taken modulo $2$.  Let $i,j$ ($i<j$) be any two vertices of $T_n$ with $f(i)=f(j)=t$.  Suppose first that $i \equiv j$.  Then $\{i,i+1,j\}$ is a directed triangle and hence $f(i+1)=t$ as well.  Now, using the directed triangles $\{i,i+1,i+2\}, \{i+1,i+2,i+3\}, \ldots, \{n-2,n-1,n\}$ as well as $\{i-1,i,i+1\}, \{i-2,i-1,i\}, \ldots, \{1,2,3\}$, we see that in fact $f(x)=t$ for all vertices $x \in T_n$, and so $T$ is trivial.  If $i \not \equiv j$ then, because $n$ is odd, we have that $i \neq 1$ or $j \neq n$.  If $i \neq 1$ then $\{i-1,i,j\}$ is a directed triangle, yielding $f(i-1)=t$ and $i-1 \equiv j$, thus reducing to the previous case.  Similarly, if $j \neq n$, we have the directed triangle $\{i,j,j+1\}$, yielding $f(j+1)=t$ and $i \equiv j+1$.
\end{proof}

\begin{thm}
\label{thm-Rtournaments}
A class $\mathcal{C}\subseteq\Rtournamentsclass$ of reflexive tournaments defined by finitely many obstructions under the homomorphic image ordering is not well quasi-ordered.
\end{thm}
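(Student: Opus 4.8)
The plan is to exhibit an infinite antichain inside every such $\mathcal{C}$, using the family of tournaments $T_n$ (for odd $n\geq 3$) from the Proposition just proved, which tells us that the only homomorphic images of $T_n$ are $T_n$ itself and the one-element tournament.

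First I would record that $\{T_n : n\geq 3 \text{ odd}\}$ is itself an infinite antichain under $\preceq$. Indeed, if $T_m\preceq T_n$ with $m\neq n$, there is an epimorphism $T_n\to T_m$; an epimorphism cannot increase the number of vertices, so $m<n$ and $T_m$ is a \emph{proper} homomorphic image of $T_n$, whence by the Proposition $T_m$ must be the one-element tournament, which is impossible since $m\geq 3$.

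Next I would take an arbitrary obstruction set and write $\mathcal{C}=\Av(O_1,\dots,O_k)$. If some $O_i$ is the one-element tournament then, since that tournament maps onto every reflexive tournament, $\mathcal{C}=\emptyset$; the empty class is vacuously wqo, so this degenerate possibility must be set aside (or excluded by convention on what counts as a genuine obstruction set). Assuming then that no $O_i$ is the one-element tournament, I would observe that $T_n\notin\mathcal{C}$ holds exactly when some $O_i$ is a homomorphic image of $T_n$, which by the Proposition forces $O_i=T_n$. Since there are only finitely many obstructions, this excludes at most $k$ values of $n$, so $\mathcal{C}$ contains $T_n$ for all but finitely many odd $n\geq 3$; these form an infinite antichain, and hence $\mathcal{C}$ is not wqo.

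I do not expect a genuine obstacle: once the Proposition is available the argument is a short counting argument, entirely parallel to part (2) of Theorem \ref{thm-Igraphs}. The only point needing care is the degenerate empty-class case flagged above, which I would dispose of with the remark that the one-element tournament being an obstruction trivialises the class.
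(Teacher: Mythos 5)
Your proof is correct and follows essentially the same route as the paper: both exhibit the infinite antichain $\{T_n : n \text{ odd}\}$ inside $\mathcal{C}$, using the Proposition that $T_n$ has no proper nontrivial homomorphic image to conclude that only finitely many $T_n$ can be excluded by finitely many obstructions. Your explicit treatment of the degenerate case where an obstruction is the one-element tournament (which makes $\mathcal{C}$ empty, hence vacuously wqo) is a small point of care that the paper glosses over, but it does not change the substance of the argument.
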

\begin{proof}
Let $\mathcal{C}=\mathrm{Av}(O_1,\ldots,O_k)$ and $n=\max(|O_1|,\dots,|O_k|)$.  
 Then no $T_m$ with $m > n$, can be mapped onto any of $O_1,\dots, O_k$. So 
$T_m \in \mathcal{C}$ for all $m > n$, and these form an antichain.
\end{proof}

\section{Concluding remarks}

We have obtained an almost complete characterisation of wqo classes of graphs, disgraphs and tournaments defined by a single obstruction under both standard and strong homomorphic image orderings.  One exception is the resolution of the wqo problem for the classes of reflexive graphs under the strong ordering defined by a single obstruction.
This, we have seen, hinges on deciding whether the avoidance classes $\Av(N_{n,k})$ of partial subcomplete graphs are wqo in general.

In fact, for irreflexive graphs, irreflexive digraphs and tournaments we have shown that no class defined by finitely many obstructions can be wqo under either ordering.
It seems that, in line with the situation for various subgraph orderings, the classification of all wqo classes defined by finitely many obstructions in the remaining settings is worth pursuing from both graph- and order-theoretic points of view.

%\begin{question} 
%Do there exist infinite wqo classes of tournaments (necessarily defined by infinitely %many obstructions)?
%\end{question}

%\section{Concluding remarks}

\noindent
\textbf{Acknowledgement.}
The authors would like to thank the anonymous referees for their comments.


\begin{thebibliography}{1}

\bibitem{Atm}
A. Atminas, R. Brignall, N. Korpelainen, V. Lozin and V. Vatter, Well-quasi-order for permutation graphs omitting a path and a clique,  
 {\it Electron. J. Combin.}  \textbf{22} (2015), Paper 2.20, 21 pp. 

\bibitem{Bla}
J. B\l{}asiok, M. Kami\'{n}ski, J-F. Raymond, T. Trunck,
Induced minors and well-quasi-ordering, 
{\it Electronic Notes in Discrete Mathematics} \textbf{49} (2015), 197--201.

\bibitem{Che11}
G. Cherlin, Forbidden substructures and combinatorial dichotomies: WQO and universality, {\it Discrete Math.} \textbf{311} (2011), 1543--1584.

\bibitem{Dam90}
P. Damaschke, Induced subgraphs and well-quasi-ordering, {\it J. Graph Theory} \textbf{14} (1990), 427--435.

%\bibitem{Dic13} L.E. Dickson, Finiteness of the odd perfect and primitive abundant numbers with $n$ distinct prime %factors, American Journal of Mathematics 35 (1913), 413--422.

\bibitem{Din92} G. Ding, Subgraphs and well quasi-ordering, {\it J. Graph Theory} \textbf{16} (1992), 489--502.

\bibitem{hell}
P. Hell and J. Ne\v{s}et\v{r}il, {\it Graphs and Homomorphisms}, OUP, New York, 2004.

\bibitem{Hig52} G. Higman, Ordering by divisibility in abstract algebras, {\it Proc. London Math. Soc.} \textbf{2} (1952), 326--336.

\bibitem{HucRus15} S. Huczynska and N. Ru{\v{s}}kuc, Homomorphic image orders on combinatorial structures, {\it Order} \textbf{32} (2015), 205--226.

\bibitem{HucRus15a}
S. Huczynska and N. Ru\v{s}kuc,
Well quasi-order in combinatorics: embeddings and homomorphisms,
in {\it Surveys in Combinatorics 2015}, pp. 261--293,
London Math. Soc. Lecture Note Ser. \textbf{424}, CUP, Cambridge, 2015.



\bibitem{kim15}
I. Kim, P. Seymour,
Tournament minors,
{\it J. Combin. Theory Ser. B} \textbf{112} (2015), 138--153. 

\bibitem{Kor11}
N. Korpelainen and V. Lozin,
Bipartite induced subgraphs and well-quasi-ordering, {\it J. Graph Theory} \textbf{67} (2011), 235--249.

\bibitem{Kor11a}
N. Korpelainen and V. Lozin, Two forbidden induced subgraphs and well-quasi-ordering,
{\it Discrete Math.} \textbf{311} (2011), 1813--1822.


\bibitem{landraitis}
C. Landraitis, A combinatorial property of the homomorphism relation
between countable order types, {\it J. Symb. Log.} \textbf{44} (1979),
403--411.


\bibitem{Lat94} B. J. Latka, Finitely constrained classes of homogeneous directed graphs, {\it J. Symb. Log.} \textbf{59} (1994), 124--139.

\bibitem{RobSey04} N. Robertson and P. Seymour, Graph minors. XX. Wagner's conjecture,
{\it J. Combin. Theory Ser. B} \textbf{92} (2004), 325--357.

\end{thebibliography}
\end{document}